\newtheorem{theorem}{Theorem}[section]
\newtheorem{lemma}{Lemma}[section]
\newtheorem{constraint}{Constraint}[section]
\newtheorem{remark}{Remark}[section]
\newtheorem{assumption}{Assumption}[section]
\newtheorem{problem}{Problem}[section]
\renewcommand{\theequation}{\thesection.\arabic{equation}}
\def\QED{~\rule[-1pt] {8pt}{8pt}\par\medskip}
\renewcommand{\theequation}{\thesection.\arabic{equation}}
\def\QED{~\rule[-1pt]{5pt}{5pt}\par\medskip}
\def\aa{{\alpha}}
\def\ee{{\epsilon}}
\def\be{\begin{equation}}
\def\bi{\begin{itemize}}
\def\ee{\end{equation}}
\def\ei{\end{itemize}}
\def\eps{\epsilon}
\def\xh{\hat{x}}
\def\xt{\tilde{x}}
\def\z1{z^{-1}}
\def\la{\label}
\def\bb{\beta}
\def\mR{\mathbb{R}}
\def\xh{\hat{x}}
\title{\LARGE \bf
Circumnavigation Using Distance Measurements (Extended Version)
}
\author{Iman Shames, Soura Dasgupta, Bar\i\c{s} Fidan, and Brian. D. O. Anderson
\thanks{This work is supported by NICTA, which is funded by the Australian Government as represented by the Department of Broadband, Communications and the Digital Economy and the Australian Research Council through the ICT Centre of Excellence program, and by US NSF grants  ECS-0622017, CCF-072902, and CCF-0830747.}
\thanks{Iman Shames, Brian. D. O. Anderson, and Bar\i\c{s} Fidan are with Australian National University and National ICT Australia. Soura Dasgupta is with Department of Electrical and Computer Engineering, University of Iowa.
        {\tt\small \{Iman.Shames, Brian.Anderson, Baris.Fidan\}@anu.edu.au, Dasupta@engineering.uiowa.edu}, }
}
\begin{document}

\maketitle
\thispagestyle{empty}
\pagestyle{empty}

\begin{abstract}
Consider a stationary agent A at an unknown location and a mobile agent
 B that must move to the vicinity of and then circumnavigate A at a prescribed distance from A. In doing
 so, B can only measure its distance from A, and knows its own position
 in some reference frame. This paper considers this problem, which has
 applications to surveillance or maintaining an orbit. In many of these
 applications it is difficult for B to directly sense the location of A,
 e.g. when all that B can sense is the intensity of a signal emitted by
 A. This intensity does, however provide a measure of the distance. We
 propose a nonlinear periodic continuous time control law that achieves
 the objective. Fundamentally, B must exploit its motion to estimate the
 location of A, and use its best instantaneous estimate of where A
 resides, to move itself to achieve the ultimate circumnavigation objective. The
 control law we propose marries these dual goals and is globally
 exponentially convergent. We  show through simulations that it also permits B to
 approximately achieve this objective when A experiences slow, persistent
 and potentially nontrivial drift.
\end{abstract}

\section{Introduction}
In surveillance missions the main objective is to obtain information about the target of interest by monitoring it for a period of time. In most cases it is desirable to monitor the target by circumnavigating it from a prescribed distance. In recent years this problem has been addressed in the context of autonomous agents, where an agent or a group of agents accomplish the surveillance task. This problem has been extensively studied for the case where the position of the target is known and the agent(s) can measure specific information about the source,   such as distance, power, angle of arrival, time difference of arrival, etc. See \cite{ShamesFidanAnderson_CDC_08,SinhaGhose_ACC_2005,KimSugie_Auto_2007} and references therein. However, in many situations, knowing the position of the target is not practical, e.g.~if one wants to find and monitor an unknown source of an electromagnetic signal. This paper addresses the problem where the position of the source is unknown, only one agent is involved, and the only information continuously available to the agent is its own position and its distance (not relative position) from the source.

There are other recent papers that consider a problem related to the one addressed in this paper. For instance \cite{CaoMorse_ECC_07} and \cite{CaoMorse_ACC_07} using concepts from switched adaptive control, consider the case where an agent must move itself to a point at a pre-set distance from three sources with unknown position in the plane using distance measurements. Another related paper is \cite{DandachFidanDasguptaAnderson_SCL_08}, where the agent's objective is to estimate the position of the source using distance measurements only.

The problem addressed in this paper can be studied as a \emph{dual control} problem as well. In a dual control problem the aim is identify the unknown parameters of the system and achieve a control objective simultaneously \cite{Feldbaum60_Dual}. In this case the mobile agent must estimate the location of the target from the distance measurements, and use this estimate  to  execute its control law for achieving its objective of circumnavigating the target from a prescribed distance. 

To estimate the target  location the agent must move in a trajectory that is not confined to a straight line in two dimensions and to a plane in three dimensions.  To perform robust estimation this avoidance of collinear/coplanar motion must be {\it persistent}  in a sense described in \cite{DandachFidanDasguptaAnderson_SCL_08}.
A key feature of the circumnavigation objective is that it ensures this requirement thus aiding the estimation task.

The structure of the paper is as follows. In the next section the problem is formally defined, and in Section \ref{sec:algo} the proposed algorithm is introduced. The analysis of the control laws is presented in Section \ref{sec:ana}. In Section \ref{pe} the persistent excitation condition on the signals is established, and The proof of exponential stability of the system is provided. In Section \ref{sA} a method to choose one of the parameters in the control laws is presented. The simulation results are presented in Section \ref{sec:sim}. In the final section future directions and concluding remarks are presented.

\section{Problem Statement}
In what follows we formally define the problem addressed in this paper and introduce relevant assumptions.
\begin{problem}
Consider a source at an unknown constant position $x$ and an agent at $y(t)$ in $\mathbb{R}^n$ ($n\in\{2,3\}$)  at time $t\in[0,\infty]$. Knowing $y(t)$, a desired distance $d$, and the measurement 
\begin{equation}\label{D}
D(t)=\|y(t)-x\|
\end{equation}
find a control law that ensures that asymptotically, $y(t)$ moves on a trajectory at a distance $d$ from $x$.
\end{problem}
Here as in the rest of the paper $\|.\|$ denotes 2-norm. For convenience in the rest of the paper, we impose the following constraint:
\begin{constraint}
The agent trajectory $y(t):\mathbb{R}\mapsto\mathbb{R}^n$, is to be  twice differentiable. Further, there exists $M_0>0$, such that $\forall t\in\mathbb{R}: \quad \|y(t)\| + \|\dot{y}(t)\|+\|\ddot{y}(t)\| \leq M_0$.
\end{constraint}
This constraint ensures that the motion of the agent can be executed with finite force. One can break down the problem into the following two sub-problems:
\begin{enumerate}
\item How one can estimate $x$?
\item How one can make the agent move on a trajectory at a distance $d$ from $x$, which, in a sense to be made precise in the sequel,  persistently spans $\mathbb{R}^n$?
\end{enumerate}
The first sub-problem is addressed in \cite{DandachFidanDasguptaAnderson_SCL_08}. However, for the algorithm of \cite{DandachFidanDasguptaAnderson_SCL_08} to work one requires that $\forall t\in\mathbb{R}$,
\begin{equation*}
\alpha_1 I \leq \int_t^{t+T_1} \dot{y}(\tau)\dot{y}(\tau)^\top d\tau \leq \alpha_2 I
\end{equation*}
for some $\alpha_1$,$\alpha_2$, and $T_1$ which are strictly positive. This condition is the well-known persistent excitation (p.e.) condition. This condition requires that the agent in $n$-dimensional space must persistently avoid a trajectory that is confined to the vicinity of a single $(n-1)$-dimensional hyperplane.
As noted earlier, the circumnavigation objective in fact assists in maintaining p.e.
\setcounter{equation}{0}
\section{Proposed Algorithm}\label{sec:algo}
We first present an algorithm for estimating $x$. To this end for $\alpha >0$ generate
\be \la{z1}
\eta(t)=\dot{z}_{1}(t) = -\alpha z_{1}(t)+\frac{1}{2}D^{2}(t), 
\ee
\be \la{z2}
m(t) =\dot{z}_{2}(t) = -\alpha z_{2}(t)+\frac{1}{2}y^\top(t)y(t),
\ee
\be \la{z3}
V(t) =\dot{z}_{3}(t) = -\alpha z_{3}(t)+y(t), 
\ee
where $z_1(0)$, and $z_2(0)=0$ are arbitrary scalars, and $z_3(0)$ is an arbitrary vector. Note that the generation of $\eta(t)$, $m(t)$ and $V(t)$ requires simply the measurements $D(t)$ and the knowledge
of the localizing agent's own position, and can be performed without explicit differentiation.

Define now the estimator:
\be \la{aloc}
\dot{\hat{x}}(t) = -\gamma V(t)(\eta(t)-m(t)+V^\top(t)\hat{x}(t)),
\ee
where $\hat{x}(t)$ denotes the estimate of $x$ at time $t$, and $\gamma>0$ is the adaptive gain. This estimator is in fact identical to the one presented in \cite{DandachFidanDasguptaAnderson_SCL_08}. Later, we shall show that under suitable conditions $\xh$ approaches $x$.

Define 
\be \la{dh}
\hat{D}(t) =\|y(t)-\hat{x}(t)\|,
\ee
 and the control law
\be \la{claw}
\dot{y}(t)= \dot{\hat{x}}(t) -\left [ (\hat{D}^2(t)-d^2) I- A(t)\right ](y(t)-\hat{x}(t)),
\ee
where $A(\cdot):\mR\rightarrow \mR^{n\times n}$ obeys four conditions captured in the assumption below. As will be proved in the next section, this control law aims at moving the agent so that $\hat{D}$ converges to $d$, i.e. the agent takes up the correct distance from the estimated of the source position, $\hat{x}$. If also $\hat{x}$ converges to $x$, then  $D$ converges to $\hat{D}$, hence $D$ converges to $d$. 
\begin{assumption} \la{aa}
\begin{itemize}
\item[(i)] There exists a $T>0$ such that
for all $t$,
\be \la{tper}
A(t+T)=A(t).
\ee
\item[(ii)] $A(t)$ is skew symmetric for all $t$.
\item[(iii)] $A(t)$ is differentiable everywhere.
\item[(iv)] the derivative of the solution of the differential equation below is 
persistently spanning.
\be \la{y*}
\dot{y}^*(t)= A(t)y^*(t).
\ee
for any arbitrary nonzero value of $y^*(0)$. More precisely, there exists a $T_1>0$, and $\aa_i>0$ such that for all $t\geq 0$ there holds
\be \la{ps}
\aa_1  \|y^*(t)\|^2 I\leq \int_{t}^{t+T_1} \dot{y}^*(\tau)\dot{y}^*(\tau)^\top d \tau \leq \aa_2 \|y^*(t)\|^2 I.
\ee
\end{itemize}
\end{assumption}
A consequence of the fact that $A(t)$ is skew symmetric is that for all $\nu\in \mR^n$ and $t\geq 0$
\be \la{skew}
\nu^\top A(t)\nu=0.
\ee
We note that the results below hold even if $A(t)$ were permitted to lose  differentiability
 at a countable number of points. However, that would imply that $\dot{y}$ would lose differentiability
at these same points, resulting in the physically unappealing need for an impulsive force to act on $y(t)$.

As will be shown in Section \ref{sA}, in two dimensions, with $E$ the rotation matrix,
\be \la{E}
E=\left[ \begin{array}{cc} 0 & 1\\-1 &0\end{array}\right ],
\ee
and $a$ any real scalar, it suffices to chose $A(t)$ as the constant matrix $aE$. In three dimensions however,
the selection of $A(t)$ is more complicated. Specifically for (\ref{ps}) to hold with a constant $A$, $A$ must be nonsingular. No $3\times3$
skew symmetric matrix is however nonsingular, leading to the need for a periodic $A(t)$, whose selection is described in Section 
\ref{sA}. To summarize, the overall system is
described by  (\ref{D}) - (\ref{claw}), with $z_i$, $\xh$, and $y$, serving 
as the underlying state variables.
\setcounter{equation}{0}
\section{Analysis}\label{sec:ana}
We shall begin by establishing that the quantities $\eta(t)$ and $m(t)$, computable from the measurements as described in the previous section, can be used to estimate $V(t)^\top x$. Of course, our ultimate goal is to estimate $x$. This will be achieved, but is harder. 

Observe from  (\ref{z1}) that  $\eta(t)=\dot{z}_1(t).$
Thus, one obtains  (recalling that $x$ is constant):
\be \la{n1}
\dot{\eta}(t)=-\aa \eta (t) + \dot{y}(t)^\top(y(t)-x).
\ee
Similarly,
\be \la{n2}
\dot{m}(t)=-\aa m (t) + \dot{y}(t)^\top y(t);
\ee
\be \la{n3}
\dot{V}(t)=-\aa V(t) + \dot{y}(t).
\ee
\be \la{vr}
\frac{d}{dt} \left [ \eta (t)- m(t) +V^\top(t)x\right ]= -\aa \left [ \eta (t)- m(t) +V^\top(t)x\right ],
\ee
which implies the following Lemma.
\begin{lemma}\label{lem:Lemma1}
For all $t_0$ and $t\geq t_0$, there holds:
\be \la{exp}
\eta (t)- m(t) +V^\top(t)x=\left [ \eta (t_0)- m(t_0) +V^\top(t_0)x\right ]e^{-\aa(t-t_0)}.
\ee
\end{lemma}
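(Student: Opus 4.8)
The plan is to recognize the scalar quantity $w(t) := \eta(t) - m(t) + V^\top(t)x$ as the solution of a first-order linear homogeneous ordinary differential equation, and then to integrate it in closed form. Equation~(\ref{vr}) is already established in the excerpt, and it asserts precisely that $\dot{w}(t) = -\alpha w(t)$, so the entire content of the lemma reduces to writing down the unique solution of this ODE and evaluating the constant of integration.

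Concretely, I would set $w(t) = \eta(t) - m(t) + V^\top(t)x$ and invoke~(\ref{vr}) to obtain $\dot{w}(t) = -\alpha w(t)$. The cleanest route is the integrating-factor observation that $\frac{d}{dt}\left[e^{\alpha t}w(t)\right] = e^{\alpha t}\bigl(\dot{w}(t) + \alpha w(t)\bigr) = 0$, so $e^{\alpha t}w(t)$ is constant on $[t_0,\infty)$. Evaluating this constant at the initial instant $t = t_0$ and rearranging gives $w(t) = w(t_0)e^{-\alpha(t-t_0)}$, which is exactly the claimed identity~(\ref{exp}). The argument holds for every $t_0$ and every $t \geq t_0$ because the coefficient $-\alpha$ is a fixed negative constant, so the solution is global and no questions of existence or uniqueness intervene.

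The only genuinely substantive step — and the one I would actually verify with care — is the derivation of~(\ref{vr}) itself, which the excerpt carries out just before the lemma. The key mechanism is that differentiating $w(t)$ via~(\ref{n1})--(\ref{n3}) produces the forcing terms $\dot{y}^\top(t)(y(t)-x)$, $-\dot{y}^\top(t)y(t)$, and $\dot{y}^\top(t)x$, and the cross terms $\dot{y}^\top(t)y(t)$ and $\dot{y}^\top(t)x$ cancel in pairs, leaving only the homogeneous term $-\alpha w(t)$. This cancellation is precisely what makes $w(t)$ decay at rate $\alpha$ independently of both the unknown target position $x$ and the agent's trajectory $y(t)$, and it is what ultimately lets the estimator in~(\ref{aloc}) recover $x$. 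Granting that cancellation, the present lemma poses no further obstacle: it is simply the exponential-decay statement corresponding to the scalar ODE~(\ref{vr}).
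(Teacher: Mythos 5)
Your proposal is correct and matches the paper's own reasoning: the paper derives (\ref{vr}) from (\ref{n1})--(\ref{n3}) via exactly the cancellation of $\dot{y}^\top(t)y(t)$ and $\dot{y}^\top(t)x$ you describe, and then treats Lemma \ref{lem:Lemma1} as the immediate closed-form solution of that scalar linear ODE. Your integrating-factor step merely makes explicit what the paper leaves implicit in the phrase ``which implies the following Lemma,'' so the two arguments are the same.
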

As foreshadowed in the last section, we now present a lemma that   shows that the agent located at $y(t)$
 moves to a trajectory maintaining a constant distance $d$  from the {\it estimated} position of the agent at position $x$. 
\begin{lemma} \la{L1} 
Consider (\ref{claw}) under Assumption \ref{aa}. Suppose there exists a $\delta >0$ such that in (\ref{dh})
$\hat{D}^2(0)>\delta$. Then $ \hat{D}^2(t) $ converges exponentially to $d^2$, and there holds for all $t\geq 0$
\be \la{dbd}
\hat{D}^2(t) > \min  \{ \delta, d^2 \}
\ee
\end{lemma}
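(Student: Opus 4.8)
The plan is to reduce the $n$-dimensional closed-loop dynamics to a single autonomous scalar differential equation in $\hat{D}^2(t)$ by exploiting the skew symmetry of $A(t)$, and then to settle both claims by elementary phase-line and Lyapunov arguments.

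First I would introduce the estimation error $e(t) = y(t) - \hat{x}(t)$ and subtract $\dot{\hat{x}}(t)$ from both sides of the control law (\ref{claw}); this cancels the $\dot{\hat{x}}(t)$ term and leaves the closed-loop error dynamics
\be
\dot{e}(t) = -\left[(\hat{D}^2(t)-d^2)I - A(t)\right]e(t).
\ee
Since $\hat{D}(t) = \|y(t)-\hat{x}(t)\| = \|e(t)\|$ by (\ref{dh}), the natural quantity to track is $W(t) := \hat{D}^2(t) = e(t)^\top e(t)$. Differentiating and substituting the error dynamics gives $\dot{W} = 2e^\top\dot{e} = -2(\hat{D}^2-d^2)\,e^\top e + 2 e^\top A(t) e$, and the second term vanishes identically by the skew symmetry property (\ref{skew}). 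Thus $A(t)$ drops out entirely and we are left with the scalar equation
\be
\dot{W}(t) = -2W(t)\left(W(t)-d^2\right) = 2W(t)\left(d^2 - W(t)\right).
\ee

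Next I would analyze this scalar flow on its own. It has exactly two equilibria, $W=0$ and $W=d^2$; the right-hand side is positive on $(0,d^2)$ and negative on $(d^2,\infty)$, so $W=d^2$ is attracting while $W=0$ is repelling, and by uniqueness of solutions no trajectory can cross either equilibrium. The role of the hypothesis $\hat{D}^2(0)>\delta>0$ is precisely to keep the initial condition away from the repelling equilibrium at the origin. Consequently $W(t)$ is monotone and bounded: if $W(0)\le d^2$ it increases and stays at least $W(0)>\delta$, while if $W(0)>d^2$ it decreases and stays strictly above $d^2$. A short case distinction on whether $\delta$ or $d^2$ is the smaller then yields the uniform lower bound $W(t) > \min\{\delta,d^2\}$ asserted in (\ref{dbd}).

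Finally, for exponential convergence I would use the Lyapunov candidate $\mathcal{L}(t) = (W(t)-d^2)^2$, whose derivative along the scalar equation is $\dot{\mathcal{L}} = -4W(t)(W(t)-d^2)^2 = -4W(t)\mathcal{L}$. Invoking the lower bound $W(t)>\min\{\delta,d^2\}=:c>0$ just established gives $\dot{\mathcal{L}} \le -4c\,\mathcal{L}$, hence $|W(t)-d^2| \le |W(0)-d^2|\,e^{-2ct}$, i.e. $\hat{D}^2(t)$ converges to $d^2$ exponentially. The one delicate point is the order of the two arguments: since the exponential rate is controlled by the positive lower bound on $W$, the invariance/monotonicity step must precede the Lyapunov estimate. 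I expect the only real obstacle to be the bookkeeping in the case analysis for (\ref{dbd}), because the constant $\min\{\delta,d^2\}$ arises from combining the two initial-condition regimes with the two possible orderings of $\delta$ and $d^2$; everything else follows directly from the reduction to the autonomous scalar flow.
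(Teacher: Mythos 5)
Your proposal is correct and follows essentially the same route as the paper: both exploit the cancellation of $\dot{\hat{x}}$ and the skew symmetry (\ref{skew}) to reduce the closed loop to the scalar equation $\frac{d}{dt}\hat{D}^2 = -2\hat{D}^2(\hat{D}^2-d^2)$ (the paper's (\ref{dd})), and then argue the bound (\ref{dbd}) by the same monotonicity/case split on $\delta$ versus $d^2$. Your explicit Lyapunov estimate $\frac{d}{dt}(\hat{D}^2-d^2)^2 \le -4\min\{\delta,d^2\}(\hat{D}^2-d^2)^2$ merely makes quantitative the exponential rate that the paper asserts informally, so it is a refinement of, not a departure from, the paper's argument.
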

\begin{proof}
Because of (\ref{skew}) one obtains that,
\begin{eqnarray}
\frac{d}{dt}\left \{ \hat{D}^2(t) -d^2 \right \} &=&2 (\dot{y}(t)-\dot{\hat{x}}(t) )^\top 
(y(t)-\hat{x}(t) )\nonumber \\
&=&-2( \hat{D}^2(t) -d^2)\hat{D}^2(t). \la{dd}
\end{eqnarray}
Observe that $\hat{D}$ is bounded and continuous. 
Consider first the case where  $\delta >d^2$.  Then the derivative above is initially negative, i.e. $\hat{D}^2(t) $ declines in value. By its continuity 
for $\hat{D}^2(t)$ to become less $d^2$,  at some point it must equal $d^2$, when  $\hat{D}(t)$ will stop changing.  Since throughout this time $\hat{D}^2(t) \geq d^2$, 
convergence of  $\hat{D}^2(t) $ to $d^2$ occurs at an exponential
rate and $\hat{D}^2(t) \geq d^2$ for all $t$.
On the other hand if $\delta \leq d^2$, then $\hat{D}^2(t)>\delta$ for all $t$, as the derivative of $\hat{D}^2(t)$ is nonnegative. Again exponential convergence 
of  $\hat{D}^2(t) $ to $d^2$ 
occurs. 
\end{proof}
Now define $\tilde{x}(t) = \hat{x}(t)-x$. We have the following Lemma, which is the first of two aimed at establishing a Lyapunov function with certain properties for the overall system. 
\begin{lemma} \la{lbd}
Consider the system defined in (\ref{D}) -  (\ref{claw}),  subject to
the requirement that $\hat{D}(0)>0$ and Assumption \ref{aa}. Define:
\begin{eqnarray} \la{L}
\begin{split}
L(t)&= \frac{1}{4\aa} \left ( \eta(t) - m(t) +V^\top(t) x \right )^2 + \frac{1}{2\gamma} \xt^\top(t)\xt (t)\\
& +\frac{1}{4} \left ( \hat{D}^2(t) -d^2 \right )^2+\sum_{i=1}^3 L_i(t),
\end{split}
\end{eqnarray}
where $L_1(t)= \left ( z_1(0)e^{-\aa  t}\right )^2$, $L_2(t)= \left ( z_2(0)e^{-\aa t}\right )^2$, and $L_3(t)= \left \|\left ( z_3(0)e^{-\aa t}\right )^2 \right \|^2$.
Then, whenever $L(t)$ is bounded so also are the 
state variables
$\xh(t)$, $y(t)$, $z_i(t)$, $\eta(t)$, $m(t)$, and $V(t)$.
\end{lemma}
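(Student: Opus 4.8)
The plan is to exploit the fact that $L(t)$ in (\ref{L}) is a sum of manifestly nonnegative terms, so that boundedness of $L(t)$ forces each summand individually to be bounded, and then to bootstrap this into boundedness of the state variables by running through the defining equations (\ref{z1})--(\ref{dh}) in a carefully chosen order that avoids circularity. The terms $L_1(t)$, $L_2(t)$, $L_3(t)$ play no role in this direction, since they are exponentially decaying functions of the initial conditions and are bounded unconditionally; only the first three summands carry information.

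First, from the term $\frac{1}{2\gamma}\xt^\top(t)\xt(t)=\frac{1}{2\gamma}\|\xt(t)\|^2$ (nonnegative because $\gamma>0$) I would conclude that $\xt(t)=\hat{x}(t)-x$ is bounded; since $x$ is a fixed constant, $\hat{x}(t)$ is then bounded. Next, from the term $\frac{1}{4}(\hat{D}^2(t)-d^2)^2$ I would deduce that $|\hat{D}^2(t)-d^2|$ is bounded, hence $\hat{D}^2(t)$ and therefore $\hat{D}(t)$ are bounded. Combining these two facts with the definition (\ref{dh}), namely $\hat{D}(t)=\|y(t)-\hat{x}(t)\|$, the triangle inequality $\|y(t)\|\leq\|\hat{x}(t)\|+\hat{D}(t)$ yields boundedness of $y(t)$. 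Consequently $D(t)=\|y(t)-x\|$ is also bounded, and so are the scalar signals $\frac{1}{2}D^2(t)$ and $\frac{1}{2}y^\top(t)y(t)$ together with the vector $y(t)$ that drive the three filters.

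The final step passes from these bounded inputs to boundedness of $z_1,z_2,z_3$ and then of $\eta,m,V$. Each of (\ref{z1})--(\ref{z3}) is a stable first-order linear filter of the form $\dot{z}=-\alpha z+u$ with $\alpha>0$, which is BIBO stable: a bounded input $u$ produces a bounded state via the standard estimate $\|z(t)\|\leq\|z(0)\|+\sup_{\tau}\|u(\tau)\|/\alpha$, obtained from the variation-of-constants formula $z(t)=e^{-\alpha t}z(0)+\int_0^t e^{-\alpha(t-\tau)}u(\tau)\,d\tau$. This gives boundedness of $z_1,z_2,z_3$, and since $\eta(t)$, $m(t)$, and $V(t)$ are affine in these bounded states and in the bounded inputs, they are bounded as well.

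I do not anticipate a deep obstacle: the argument is a boundedness bootstrap rather than a genuine stability proof. The one point requiring care is simply the order of the implications, since $y(t)$ must be shown bounded using $\hat{x}(t)$ and $\hat{D}(t)$ \emph{before} the filter inputs can be controlled; it is also worth noting that the first summand of $L(t)$, which bounds $\eta(t)-m(t)+V^\top(t)x$, is not actually needed for this lemma and is retained only for the Lyapunov analysis to follow.
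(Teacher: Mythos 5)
Your proof is correct and follows essentially the same route as the paper's (much terser) argument: the second and third summands of $L$ bound $\hat{x}(t)$ and $\hat{D}(t)$, hence $y(t)$ and $D(t)$, after which the BIBO stability of the first-order filters (\ref{z1})--(\ref{z3}) bounds the $z_i$ and therefore $\eta$, $m$, $V$. Your explicit ordering of the bootstrap and the observation that the first summand and the $L_i$ terms are not needed here are accurate refinements of what the paper leaves implicit.
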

\begin{proof}
The second and third  terms on the right hand side (\ref{L}) ensure the boundedness of $\xh(t)$, $y(t)$ and $D(t)$. Thus as $L_i(t)$ are bounded (\ref{z1}), (\ref{z2}) and (\ref{z3}) ensure that the $z_i(t)$ are bounded as well.
\end{proof}
We now define, for $\Delta>0$ the set
\be \la{lset}
S(\Delta) = \{ [\xh^\top , y^\top, z_1, z_2, z_3^\top]^\top | L \leq \Delta    \}.
\ee
Because of Lemma \ref{lbd}, the set $S(\Delta)$ is compact.

Next we identify an invariant set for (\ref{D}) - (\ref{aloc}).
\begin{lemma} \la{linv}
Consider the system defined in (\ref{D}) - (\ref{claw}). Define the set ${\mathcal S_I}$ as the set of vectors $[\xh^\top , y^\top, z_1, z_2, z_3^\top]^\top$ satisfying, $\xh=x$, $\|y-x\| =d$, and $z_1-z_2+z_3^\top x =\frac{x^\top x}{2\aa}$. Then $[\xh^\top(0) , y^\top(0), z_1(0), z_2(0), z_3^\top(0)]^\top \in {\mathcal S_I}$ 
 implies $[\xh^\top(t) , y^\top(t), z_1(t), z_2(t), z_3^\top(t)]^\top \in {\mathcal S_I}$  for all $t\geq 0$.
\end{lemma}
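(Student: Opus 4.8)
The plan is to verify that each of the three equalities defining $\mathcal{S}_I$ is preserved by the flow, using the two decay laws already established, namely (\ref{vr}) (equivalently Lemma \ref{lem:Lemma1}) and the scalar relation (\ref{dd}). Since the three conditions are coupled, I would prove them in a cascade: first the constraint on $(z_1,z_2,z_3)$, then $\hat x=x$, then $\|y-x\|=d$, with each step supplying what the next one needs.

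The one genuinely new ingredient is an algebraic identity. Inserting the definitions $\eta=-\alpha z_1+\tfrac12 D^2$, $m=-\alpha z_2+\tfrac12 y^\top y$ and $V=-\alpha z_3+y$ from (\ref{z1})--(\ref{z3}) and expanding $D^2=\|y-x\|^2=y^\top y-2y^\top x+x^\top x$, the quadratic and cross terms in $y$ cancel and one is left, for every $t$, with
\[
\eta(t)-m(t)+V^\top(t)x=-\alpha\bigl(z_1(t)-z_2(t)+z_3^\top(t)x\bigr)+\tfrac12\,x^\top x .
\]
Thus the third defining relation $z_1-z_2+z_3^\top x=\tfrac{x^\top x}{2\alpha}$ says exactly that $\eta-m+V^\top x=0$. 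Reading (\ref{vr}), or Lemma \ref{lem:Lemma1} with $t_0=0$, this quantity evolves as $[\eta(0)-m(0)+V^\top(0)x]e^{-\alpha t}$, so if it vanishes at $t=0$ it vanishes for all $t\ge 0$; by the identity above the constraint on the $z_i$ is then maintained for all $t\ge 0$.

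Given $\eta-m+V^\top x\equiv 0$, I would next treat $\hat x=x$. Setting $\tilde x=\hat x-x$ and using the estimator (\ref{aloc}),
\[
\dot{\tilde x}=-\gamma V\bigl(\eta-m+V^\top x+V^\top\tilde x\bigr)=-\gamma VV^\top\tilde x ,
\]
the second equality because the bracketed quantity now vanishes identically along the trajectory. Reading $V(\cdot)$ off the trajectory, this is a homogeneous linear time-varying equation for $\tilde x$ with $\tilde x(0)=0$, so by uniqueness $\tilde x(t)\equiv 0$, i.e. $\hat x(t)=x$ for all $t$. Finally, with $\hat x=x$ the definition (\ref{dh}) gives $\hat D=\|y-\hat x\|=\|y-x\|$, so $\|y(0)-x\|=d$ is the same as $\hat D^2(0)-d^2=0$; since $\hat D^2-d^2=0$ is an equilibrium of the scalar equation (\ref{dd}), we get $\hat D^2(t)=d^2$ and hence $\|y(t)-x\|=d$ for all $t$, which closes the cascade.

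The main obstacle --- indeed the only step that is not routine --- is recognising the identity of the second paragraph, which turns the opaque constraint $z_1-z_2+z_3^\top x=\tfrac{x^\top x}{2\alpha}$ into the vanishing of precisely the quantity that (\ref{vr}) already controls. After that, the preservation of $\hat x=x$ and of $\|y-x\|=d$ follows from standard uniqueness arguments for the resulting decoupled linear and scalar equations, the latter relying (through (\ref{dd})) on the skew-symmetry (\ref{skew}) of $A(t)$.
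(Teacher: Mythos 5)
Your proof is correct, and while it rests on the same computations as the paper's, its logical organization is genuinely different. The paper argues by stationarity on the set: assuming the state lies in ${\mathcal S_I}$, it uses $D=\hat D=d$ to get $D^2-y^\top y+2y^\top\hat x=x^\top x$, concludes $\dot z_1-\dot z_2+\dot z_3^\top\hat x=-\alpha\left(z_1-z_2+z_3^\top x\right)+\tfrac{x^\top x}{2}=0$, identifies this quantity with $\eta-m+V^\top\hat x$ so that (\ref{aloc}) gives $\dot{\hat x}=0$, and then gets $\dot{\hat D}=0$ from (\ref{dd}); i.e.\ all constraint derivatives vanish pointwise on ${\mathcal S_I}$. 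You instead run a cascade: your unconditional identity $\eta-m+V^\top x=-\alpha\left(z_1-z_2+z_3^\top x\right)+\tfrac12 x^\top x$ (which is correct, and holds off ${\mathcal S_I}$ too, since $D^2=\|y-x\|^2$ by definition) converts the $z$-constraint into the vanishing of exactly the quantity governed by (\ref{vr}), so it is preserved \emph{independently of the other two constraints}; then $\dot{\tilde x}=-\gamma VV^\top\tilde x$ with $\tilde x(0)=0$ and uniqueness for a linear time-varying ODE give $\hat x\equiv x$; then the autonomous scalar equation (\ref{dd}), for which $\hat D^2=d^2$ is an equilibrium, plus uniqueness give $\|y-x\|\equiv d$. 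What your route buys is rigor and a marginally stronger statement: "constraint derivatives vanish on the set" implies forward invariance only modulo a Lipschitz/uniqueness argument (tangency alone does not preclude escape, as $\dot u=\sqrt{|u|}$ shows), and your explicit uniqueness steps supply precisely that; you also show the $z$-constraint is invariant on its own. What the paper's shorter version buys is that it exhibits ${\mathcal S_I}$ directly as a set where $\dot{\hat x}=0$ and $\dot{\hat D}=0$, which is the form of the statement actually invoked in the LaSalle argument of Theorem \ref{lass}.
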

\begin{proof}
First observe that on ${\mathcal S_I}$, $D=\hat{D}=d$. Thus on ${\mathcal S_I}$, 
\[
D^2-y^\top y+2y^\top\xh= d^2-y^\top y+2x^\top y =x^\top x.
\]
Further, from (\ref{z1}), (\ref{z2}) and (\ref{z3}) on ${\mathcal S_I}$ 
\begin{eqnarray*}
\begin{split}
\dot{z}_1(t)-\dot{z}_2(t)+\dot{z}_3^\top(t)\xh(t)&= -\aa\left (z_1(t)-z_2(t)+z_3^\top(t)\xh(t)\right ) \\
&+\frac{D^2(t)-y^\top(t)y(t)+2y^\top(t)\xh(t)}{2}\\
&=-\aa\left (z_1(t)-z_2(t)+z_3^\top(t)x\right )\\ &+\frac{x^\top x}{2}
\end{split}
\end{eqnarray*}
Consequently, on ${\mathcal S_I}$; $\dot{z}_1(t)-\dot{z}_2(t)+\dot{z}_3^\top(t)\xh(t)=0$,
i.e. $\dot{z}_1(t)-\dot{z}_2(t)+\dot{z}_3^\top(t)x=0$.
Then,
\[
\eta (t)- m(t) +V^\top(t)\xh(t)=\dot{z}_1(t)-\dot{z}_2(t)+\dot{z}_3^\top(t)\xh(t)=0,
\]
and so $\dot{\xh}(t)=0$ from (\ref{aloc}).  Last because of (\ref{dd}),  and the fact that $\hat{D}=d$, we have that $\dot{\hat{D}}(t)=0$.
\end{proof}
Then we have the following theorem whose proof is given in Appendix \ref{app:proof1}.

\begin{theorem} \la{lass}
Consider the system defined in (\ref{D}) -  (\ref{claw}) and
${\mathcal S_I}$ defined in Lemma \ref{linv}. Then for arbitrary initial conditions,
subject to
the requirement that $\hat{D}(0)>0$ and Assumption \ref{aa}, $\|\xh^\top(t) , y^\top(t), z_1(t), z_2(t), z_3^\top(t)\|$ is bounded $\forall t\geq0$, and there holds:
\be \la{gc}
\lim_{t\rightarrow \infty} \min_{z\in {\mathcal S_I} } \|z-[\xh^\top(t) , y^\top(t), z_1(t), z_2(t), z_3^\top(t)]^\top \|=0.
\ee
Further, convergence is uniform in the initial time. 
\end{theorem}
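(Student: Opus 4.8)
The plan is to treat $L$ in (\ref{L}) as a Lyapunov function and to combine its monotonicity with a LaSalle-type invariance argument adapted to the $T$-periodic dynamics, with $\mathcal S_I$ of Lemma \ref{linv} as the attractor. First I would differentiate $L$ along (\ref{D})--(\ref{claw}). Writing $e(t)=\eta(t)-m(t)+V^\top(t)x$ and $\xt=\xh-x$, the estimator (\ref{aloc}) reads $\dot{\xh}=-\gamma V(e+V^\top\xt)$, (\ref{vr}) gives $\dot e=-\aa e$, and (\ref{dd}) gives $\frac{d}{dt}(\hat D^2-d^2)=-2(\hat D^2-d^2)\hat D^2$. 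Substituting and completing the square yields
\begin{equation}
\dot L=-\tfrac12\big(e+V^\top\xt\big)^2-\tfrac12\big(V^\top\xt\big)^2-(\hat D^2-d^2)^2\hat D^2+\sum_{i=1}^3\dot L_i\le0,
\end{equation}
since each $\dot L_i\le0$. Hence $L$ is nonincreasing, $L(t)\le L(0)=:\DD$, the trajectory remains in the compact set $S(\DD)$, and Lemma \ref{lbd} yields boundedness of all state variables; together with the standing smoothness/boundedness requirement on $y$, every signal and its derivative are then bounded.

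Next I would use that the core system (\ref{D})--(\ref{claw}) is $T$-periodic in $t$, while the state-dependent part of $L$, namely $L-\sum_i L_i$, is a genuine function of the state (the $L_i$ being explicit, exponentially vanishing functions of time). Appending the phase $t\bmod T$ as a state on a circle autonomizes the system on a compact set, so LaSalle's invariance principle applies and the trajectory approaches the largest invariant set on which the derivative of this state-dependent part vanishes. By the first three terms of the displayed identity, on the $\omega$-limit set one must have $V^\top\xt\equiv0$, $e\equiv0$, and---since (\ref{dbd}) keeps $\hat D$ bounded away from zero---$\hat D\equiv d$.

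The decisive step, and the one I expect to be the main obstacle, is to promote $V^\top\xt\equiv0$ to $\xt\equiv0$ on the limit set; this is where the dual-control tension surfaces, because persistence of excitation of the regressor $V$ cannot be asserted directly along the controlled motion, only after the motion is frozen onto the limit set. There $e\equiv0$ and $V^\top\xt\equiv0$ make $\dot{\xh}=0$ in (\ref{aloc}), so $\xt$ equals a constant $\bar{\xt}$; and with $\hat D\equiv d$, (\ref{claw}) collapses to $\dot y=A(t)(y-\xh)$, so $y^*:=y-\xh$ solves (\ref{y*}) with $\|y^*\|=\hat D=d>0$. Differentiating the identity $V^\top(t)\bar{\xt}\equiv0$ and using $\dot V=-\aa V+\dot y=-\aa V+\dot y^*$ from (\ref{n3}) leaves $\dot y^{*\top}\bar{\xt}\equiv0$, whence
\begin{equation}
\bar{\xt}^\top\!\left(\int_t^{t+T_1}\!\dot y^*(\tau)\dot y^{*\top}(\tau)\,d\tau\right)\!\bar{\xt}=\int_t^{t+T_1}\!\big(\dot y^{*\top}(\tau)\bar{\xt}\big)^2\,d\tau=0.
\end{equation}
The lower bound in (\ref{ps}), evaluated with $\|y^*(t)\|^2=d^2$, then forces $\aa_1 d^2\|\bar{\xt}\|^2\le0$, i.e.\ $\bar{\xt}=0$ and $\xh=x$ on the limit set.

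It remains to check the third defining relation of $\mathcal S_I$: as in the proof of Lemma \ref{linv}, $w:=z_1-z_2+z_3^\top x$ obeys $\dot w=-\aa\big(w-\tfrac{x^\top x}{2\aa}\big)$, so $w\to\tfrac{x^\top x}{2\aa}$ and the relation holds on the limit set. Thus the $\omega$-limit set is contained in $\mathcal S_I$, which is exactly (\ref{gc}). Finally, uniformity of the convergence in the initial time should follow from the $T$-periodicity of the dynamics together with the time-independence of the state-dependent part of $L$---equivalently, from the compactness of the phase circle in the autonomized system---so that the resulting estimate of the distance to $\mathcal S_I$ does not depend on $t_0$.
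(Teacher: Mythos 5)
Your proposal is correct and follows essentially the same route as the paper's own proof in Appendix~A: the same Lyapunov function $L$ from (\ref{L}), the same completed-square computation giving $\dot L\le 0$, boundedness via the compact sublevel set $S(\Delta)$ and Lemma \ref{lbd}, a LaSalle invariance argument showing $\dot L\equiv 0$ forces $\hat{x}\equiv x^*$ constant and $y-\hat{x}$ a solution of (\ref{y*}) with norm $d$, and the identical use of the lower bound in (\ref{ps}) with $\|y^*\|=d$ to conclude $\tilde{x}=0$. Your only deviations are minor and harmless: you obtain $z_1-z_2+z_3^\top x=\frac{x^\top x}{2\alpha}$ from the globally valid ODE for $w=z_1-z_2+z_3^\top x$ (using that $D^2-y^\top y+2y^\top x\equiv x^\top x$) rather than by differentiating $\eta-m+V^\top x\equiv 0$ on the invariant set, and you justify the invocation of LaSalle and the uniformity claim explicitly by autonomizing the $T$-periodic system, points the paper asserts informally.
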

\begin{remark}
Theorem \ref{lass} in particular implies that, $\tilde{x}(t)$ is bounded, $\lim_{t\rightarrow \infty} \xt (t)=0$, $\lim_{t\rightarrow \infty} D (t)=d$, and $\lim_{t\rightarrow \infty} \left ( y(t)-x- y^*(t) \right )=0$, where $y^*(t) $ is a nonzero solution of (\ref{y*}) with nonzero $y^*(0)$ that for all $t$ obeys $\|y^*(t)\|=d$. The first two limits follow from the definition of $ {\mathcal S_I} $. The last limit is a consequence of (\ref{claw}).
\end{remark} 
\setcounter{equation}{0}
\section{Exponential convergence and Persistent Excitation} \la{pe}
Having shown uniform  asymptotic stability in the foregoing, we will now demonstrate that in fact the stability is exponential. For this we establish a persistent spanning condition first on $\dot{y}(t)$, and ultimately on $V(t)$. This will be the key to establishing the \emph{exponential} convergence of $\hat{x}$ to $x$, which is a strengthening of the result of Theorem \ref{lass}. First we establish certain conditions on the state transition matrix for $A(t)$, i.e. on  $  \Phi(t,t_0) $ that obeys for
all $t,t_0$,
\be \la{st}
\dot{\Phi}(t,t_0) =A(t) \Phi(t,t_0).
\ee
\begin{lemma}\la{lorth}
Consider $\Phi(t,t_0)$ defined in (\ref{st}),with Assumption \ref{aa}. Then for all $t,t_0$
\[
\Phi^\top(t,t_0) \Phi(t,t_0) =I.
\]
\end{lemma}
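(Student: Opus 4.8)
The claim is that the state transition matrix $\Phi(t,t_0)$ associated with the skew-symmetric system matrix $A(t)$ is orthogonal for all $t,t_0$. The natural plan is to show that the matrix product $\Phi^\top(t,t_0)\Phi(t,t_0)$ is constant in $t$ and then evaluate it at the initial time. First I would differentiate this product with respect to $t$, using the defining relation (\ref{st}) for $\dot\Phi(t,t_0)$ and its transpose $\dot\Phi^\top(t,t_0)=\Phi^\top(t,t_0)A^\top(t)$.

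Carrying out the product rule gives
\be
\frac{d}{dt}\left[\Phi^\top(t,t_0)\Phi(t,t_0)\right]
=\Phi^\top(t,t_0)A^\top(t)\Phi(t,t_0)+\Phi^\top(t,t_0)A(t)\Phi(t,t_0)
=\Phi^\top(t,t_0)\left[A^\top(t)+A(t)\right]\Phi(t,t_0).
\ee
Here the key ingredient is Assumption \ref{aa}(ii), skew-symmetry of $A(t)$, which says $A^\top(t)=-A(t)$, so the bracketed term $A^\top(t)+A(t)$ vanishes identically. Hence the derivative of $\Phi^\top(t,t_0)\Phi(t,t_0)$ is zero for all $t$, so the product is constant along $t$.

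To pin down the constant value, I would use the standard initial condition of a state transition matrix, namely $\Phi(t_0,t_0)=I$, which yields $\Phi^\top(t_0,t_0)\Phi(t_0,t_0)=I$. Combining this with the fact that the product is constant in $t$ gives $\Phi^\top(t,t_0)\Phi(t,t_0)=I$ for all $t$, and since $t_0$ was arbitrary the identity holds for all $t,t_0$, completing the argument.

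I do not expect any substantive obstacle here: the proof is a short and standard computation, and the only thing to be careful about is invoking skew-symmetry at exactly the right moment and not conflating $A^\top(t)+A(t)$ with anything other than the zero matrix. One small point worth verifying is that $\Phi(t,t_0)$ is indeed differentiable so that the product rule applies, which follows from Assumption \ref{aa}(iii) (differentiability of $A(t)$) guaranteeing a well-defined, differentiable transition matrix.
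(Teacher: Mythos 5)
Your proof is correct, but it takes a different (and slightly more self-contained) route than the paper. The paper argues at the level of solutions: it writes $y^*(t)=\Phi(t,t_0)y^*(t_0)$ for the system (\ref{y*}) and invokes (\ref{skew}) to conclude norm preservation, $\|y^*(t)\|=\|y^*(t_0)\|$, for every initial condition; orthogonality of $\Phi(t,t_0)$ then follows because the symmetric matrix $\Phi^\top(t,t_0)\Phi(t,t_0)-I$ has vanishing quadratic form on all of $\mR^n$ and hence is zero. You instead differentiate the Gram matrix directly: $\frac{d}{dt}\bigl[\Phi^\top\Phi\bigr]=\Phi^\top\bigl(A^\top(t)+A(t)\bigr)\Phi=0$ by skew-symmetry, and then fix the constant via $\Phi(t_0,t_0)=I$. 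The two arguments hinge on the identical algebraic fact --- skew-symmetry annihilates the relevant quadratic form --- but your matrix computation avoids the polarization step (``since this holds for all $y^*(t_0)$'') that the paper leaves implicit, while the paper's version avoids the product rule on $\Phi^\top\Phi$. A minor refinement of your closing remark: differentiability of $A(t)$ (Assumption \ref{aa}(iii)) is more than is needed; continuity of $A(t)$ already guarantees that $\Phi(t,t_0)$ exists, is differentiable in $t$, and satisfies (\ref{st}), so your appeal to the product rule is safe under weaker hypotheses. Both proofs cover all $t,t_0$, including $t<t_0$, since the defining relation (\ref{st}) holds for all $t$.
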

\begin{proof}
Consider (\ref{y*}). Then for all $t,t_0$, and $y^*(t_0)$
\[
y^*(t)=\Phi(t,t_0) y^*(t_0).
\]
Further because of (\ref{skew}),
\[
\|y^*(t)\|=\|y^*(t_0)\|.
\]
Since this holds for all $ y^*(t_0)$, the result holds.
\end{proof}
Next the first promised result.
\begin{lemma} \la{lydpe}
Consider the system defined in (\ref{D}) - (\ref{claw}),  subject to
the requirement that $\hat{D}(0)>0$ and Assumption \ref{aa}. Then there exists a $T_2$ such that
for all $t\geq 0$,
\be \la{ydpe}
\frac{\aa_1 d^2}{2} I \leq \int_{t}^{t+T_2} \dot{y}(\tau)\dot{y}(\tau)^\top d \tau \leq \alpha_2 I
\ee
\end{lemma}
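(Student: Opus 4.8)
The plan is to treat the two inequalities in (\ref{ydpe}) separately. The upper bound is immediate from Constraint~2.1: since $\|\dot{y}(\tau)\|\leq M_0$, we have $\dot{y}(\tau)\dot{y}(\tau)^\top \leq M_0^2 I$ pointwise, so $\int_t^{t+T_2}\dot{y}\dot{y}^\top\,d\tau \leq M_0^2 T_2 I$, which is the asserted upper bound for a suitable finite constant (named $\alpha_2$). The real content is the lower bound, and for it I would exploit the asymptotic picture from Theorem~\ref{lass} and its Remark, namely that $y(t)-x-y^*(t)\to 0$, where $y^*$ solves (\ref{y*}) with $\|y^*(t)\|=d$ and, by Assumption~\ref{aa}(iv), $\dot{y}^*$ is persistently spanning. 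Because $\|y^*(t)\|=d$, (\ref{ps}) reads $\alpha_1 d^2 I \leq \int_s^{s+T_1}\dot{y}^*\dot{y}^{*\top}\,d\tau$ for all $s\geq 0$, and this is the source of the lower bound.

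First I would upgrade position-level convergence to velocity-level convergence, i.e.\ show $\dot{y}(t)-\dot{y}^*(t)\to 0$. Setting $w(t)=y(t)-x-y^*(t)$ and differentiating, the control law (\ref{claw}) together with $\dot{y}^*=A(t)y^*$ and $y-\hat{x}-y^*=w-\tilde{x}$ gives
\be
\dot{w}(t)=\dot{\hat{x}}(t)-(\hat{D}^2(t)-d^2)(y(t)-\hat{x}(t))+A(t)\big(w(t)-\tilde{x}(t)\big).
\ee
Each term on the right vanishes as $t\to\infty$: $\tilde{x}(t)\to 0$ and $w(t)\to 0$ by the Remark; $\hat{D}^2(t)-d^2\to 0$ by Lemma~\ref{L1} with $y-\hat{x}$ bounded; and $\dot{\hat{x}}(t)\to 0$ because, from (\ref{aloc}), $\dot{\hat{x}}=-\gamma V(\eta-m+V^\top\hat{x})$ with $\eta-m+V^\top\hat{x}=(\eta-m+V^\top x)+V^\top\tilde{x}$, the first summand decaying by Lemma~\ref{lem:Lemma1} and the second vanishing since $V$ is bounded and $\tilde{x}\to 0$. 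Hence $\dot{w}(t)\to 0$. Note this requires no Barbalat-type argument, since $\dot{w}$ is written directly as a sum of vanishing terms.

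Next I would turn this into a lower bound on the Gram integral for large times. Both $\dot{y}$ (Constraint~2.1) and $\dot{y}^*$ (as $\|y^*\|=d$ and $A$ is periodic, hence bounded) are bounded, so $\dot{y}\dot{y}^\top-\dot{y}^*\dot{y}^{*\top}=\dot{y}(\dot{y}-\dot{y}^*)^\top+(\dot{y}-\dot{y}^*)\dot{y}^{*\top}\to 0$. Consequently there is a $t^*$ such that for every $s\geq t^*$ the windowed integral of this difference has norm at most $\alpha_1 d^2/2$, whence $\int_s^{s+T_1}\dot{y}\dot{y}^\top\,d\tau \geq \alpha_1 d^2 I-\tfrac{\alpha_1 d^2}{2}I=\tfrac{\alpha_1 d^2}{2}I$.

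The remaining, and genuinely delicate, point is that (\ref{ydpe}) must hold for \emph{every} $t\geq 0$, whereas the previous estimate is valid only once $s\geq t^*$; for small $t$ the trajectory need not yet resemble $y^*$, and $\dot{y}$ could momentarily fail to span. I would resolve this by enlarging the window: take $T_2=t^*+T_1$. Since $\dot{y}\dot{y}^\top$ is positive semidefinite, discarding part of the integration range can only decrease the integral in the matrix order, so it suffices to exhibit, inside each $[t,t+T_2]$, a length-$T_1$ sub-window lying beyond $t^*$. For $t\geq t^*$ use $[t,t+T_1]$; for $t<t^*$ use $[t^*,t^*+T_1]$, which is contained in $[t,t+T_2]$ exactly because $T_2=t^*+T_1$. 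In either case $\int_t^{t+T_2}\dot{y}\dot{y}^\top\,d\tau \geq \tfrac{\alpha_1 d^2}{2}I$, completing the lower bound. The main obstacle is precisely this last step, promoting an asymptotic spanning property into a uniform-in-$t$ one; the positive-semidefiniteness together with the choice of a large $T_2$ is what makes it succeed.
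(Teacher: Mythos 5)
Your proof is correct, and it takes a genuinely different route from the paper's. The paper proves the lower bound quantitatively: it writes $\dot{y}-\dot{\hat{x}}$ by variation of constants along the flow $\Phi(t,t_1)$ of $A(t)$ (its (\ref{clawc})), bounds the convolution perturbation using the \emph{exponential} decay of $\hat{D}^2-d^2$ from Lemma \ref{L1} together with $\|\dot{\hat{x}}\|\le\epsilon$ from Theorem \ref{lass}, obtains a pointwise-in-$t$ lower bound $|\theta^\top\dot{y}(t)|\ge |\theta^\top A(t)\Phi(t,t_2)(y(t_2)-\hat{x}(t_2))|-O(\epsilon)$, and then integrates, invoking the p.e.\ property (\ref{psm}) applied to the \emph{actual} vector $y(t_2)-\hat{x}(t_2)$ (whose norm is within $\epsilon$ of $d$), with explicit $\sum_i K_i\epsilon^i$ bookkeeping. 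You instead pass to the limiting orbit: you take from the Remark after Theorem \ref{lass} that $y(t)-x-y^*(t)\to 0$ with $\|y^*(t)\|\equiv d$, upgrade this to velocity convergence $\dot{y}-\dot{y}^*\to 0$ by exhibiting $\dot{w}$ as a sum of vanishing terms (your decomposition is algebraically correct), and then perturb the Gram integral of $\dot{y}^*$, which by (\ref{ps}) with $\|y^*\|=d$ dominates $\aa_1 d^2 I$. Both proofs finish with the identical endgame: enlarging the window ($T_2=t_2+T_1$ in the paper, $T_2=t^*+T_1$ for you) and using positive semidefiniteness of $\dot{y}\dot{y}^\top$ to discard the transient, so the uniform-in-$t$ issue you flag as the delicate point is handled the same way in both. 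What each approach buys: yours is shorter and avoids the $\epsilon$-bookkeeping, but it is only as rigorous as the Remark, which the paper justifies in one line; to make it self-contained you would note that $y-\hat{x}$ \emph{exactly} satisfies $\frac{d}{dt}(y-\hat{x})=-[(\hat{D}^2-d^2)I-A(t)](y-\hat{x})$, so the radial factor $e^{-\int(\hat{D}^2-d^2)}$ converges precisely because Lemma \ref{L1} gives exponential (not merely asymptotic) convergence of $\hat{D}^2$ to $d^2$ --- mere convergence to zero would not make that integral converge, so this is a genuine dependence your write-up leaves implicit. The paper's route sidesteps the Remark entirely and keeps the dependence of $T_2$ on the decay rate $\gamma$ explicit. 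Two minor points: your renaming of the upper-bound constant is appropriate (the paper's own upper bound is likewise ``some finite constant,'' not literally the $\aa_2$ of (\ref{ps})), though invoking Constraint 2.1 there is slightly circular --- the bound on $\|\dot{y}\|$ is more properly derived from the boundedness of the state variables guaranteed by Theorem \ref{lass} together with (\ref{claw}), which is what the paper does.
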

\begin{proof} See Appendix \ref{app:proof2}
\end{proof}
We now show that $V(t)$ satisfies a p.e. condition as well.
\begin{theorem} \la{tpe}
Consider the system defined in (\ref{D}) - (\ref{claw}),  subject to
the requirement that $\hat{D}(0)>0$ and Assumption \ref{aa}. Then there exist 
$\alpha_{3}>0,\alpha_{4}>0,T_3>0$ such that for all $t\ge 0$
\begin{equation}\label{pev}
\alpha_{3}I\le \int \limits_{t}^{t+T_3} V(\tau)V^{\top}(\tau)d\tau \le \alpha_{4}I.
\end{equation}
\end{theorem}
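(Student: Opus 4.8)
The plan is to exploit the fact that, by (\ref{n3}), $V$ is obtained from $\dot{y}$ through the scalar, exponentially stable, zero-free filter $1/(s+\alpha)$ applied identically to each coordinate, and that such filtering preserves the persistent spanning already established for $\dot{y}$ in Lemma \ref{lydpe}. The upper bound in (\ref{pev}) is immediate: solving (\ref{n3}) gives $\|V(t)\|\le \|V(0)\|+M_0/\alpha =: \beta$ for all $t$, using $\|\dot{y}(t)\|\le M_0$, so any $T_3$ yields $\int_t^{t+T_3}V V^\top d\tau \le T_3\beta^2 I =: \alpha_4 I$. For the lower bound I would fix an arbitrary unit vector $w\in\mathbb{R}^n$ and set $r(\tau)=w^\top \dot{y}(\tau)$ and $s(\tau)=w^\top V(\tau)$. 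Taking the quadratic form of (\ref{ydpe}) with $w$ gives $\int_{t'}^{t'+T_2}r^2 d\tau \ge \alpha_1 d^2/2$ for every $t'\ge 0$, while (\ref{n3}) gives the scalar relation $\dot{s}+\alpha s = r$. It then suffices to produce a lower bound on $\int_t^{t+T_3}s^2 d\tau$ that is uniform in both $t$ and $w$, since the minimum over unit $w$ of that integral is $\lambda_{\min}\big(\int_t^{t+T_3}VV^\top d\tau\big)$.

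The engine of the argument is the pointwise identity $r^2=\dot{s}^2+\alpha\,\frac{d}{dt}(s^2)+\alpha^2 s^2$, obtained by squaring $r=\dot{s}+\alpha s$. Integrating it over $[t,t+NT_2]$ for an integer $N$ to be chosen, the middle term telescopes to the $O(1)$ boundary contribution $\alpha(s^2(t+NT_2)-s^2(t))$. The term $\int \dot{s}^2$ I would handle by integration by parts, $\int \dot{s}^2 = [s\dot{s}] - \int s\,\ddot{s}$, together with $\ddot{s}=\dot{r}-\alpha\dot{s}$ (from differentiating $\dot{s}+\alpha s=r$), which yields $\int \dot{s}^2 = [s\dot{s}] + \tfrac{\alpha}{2}[s^2] - \int s\,\dot{r}$. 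Here $|s|\le\beta$, $|\dot{s}|=|r-\alpha s|\le M_0+\alpha\beta$, and crucially $|\dot{r}|=|w^\top\ddot{y}|\le M_0$ by the standing constraint $\|y\|+\|\dot{y}\|+\|\ddot{y}\|\le M_0$, so the two boundary brackets are bounded by a constant $C_0$ independent of $t$, $w$, $N$, and Cauchy--Schwarz gives $|\int s\,\dot{r}|\le M_0\sqrt{NT_2}\,\big(\int s^2\big)^{1/2}$.

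Collecting these, and writing $P_N=\int_t^{t+NT_2}s^2 d\tau$, the linear-growth lower bound $\int_t^{t+NT_2}r^2 d\tau \ge N\alpha_1 d^2/2$ (summing the p.e.\ bound over the $N$ consecutive subintervals) combines with the above into $N\alpha_1 d^2/2 \le \alpha^2 P_N + M_0\sqrt{NT_2}\sqrt{P_N} + C_0$. The left side grows like $N$, whereas on the right the boundary constant $C_0$ is fixed and the cross term is only $O(\sqrt{N}\sqrt{P_N})$; hence if $P_N<\kappa N$ the right side is strictly smaller than the left once $\kappa$ is taken small and $N$ large, a contradiction. This forces $P_N\ge \kappa N$ with $\kappa>0$ depending only on $\alpha,\alpha_1,d,T_2,M_0,\beta$, so choosing a fixed large $N_0$ and setting $T_3=N_0T_2$, $\alpha_3=\kappa N_0$ establishes the lower bound uniformly in $t$ and $w$.

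I expect the main obstacle to be exactly the $O(1)$ boundary terms $[s\dot{s}]$ and $[s^2]$: over a single interval they can swamp the accumulated energy $\int r^2$ and so preclude any lower bound on $\int s^2$. The amortization over $N$ subintervals is what defeats them, because the excitation accumulates linearly in $N$ while the boundary terms stay bounded; the boundedness of $\ddot{y}$ from the constraint is the ingredient that keeps $\dot{r}$, and hence these estimates, under control.
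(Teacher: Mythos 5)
Your proposal is correct, but it takes a genuinely different route from the paper: the paper's entire proof of Theorem \ref{tpe} is a one-line appeal to Lemma \ref{lydpe} together with the citation \cite{DandachFidanDasguptaAnderson_SCL_08}, i.e.\ it outsources to prior work the fact that the exponentially stable filter $\dot{V}=-\alpha V+\dot{y}$ of (\ref{n3}) maps a persistently spanning $\dot{y}$ to a persistently spanning $V$, and supplies no argument of its own. You instead prove that filtering lemma from scratch. Your mechanism --- the exact identity $r^2=\dot{s}^2+\alpha\frac{d}{dt}(s^2)+\alpha^2 s^2$ for $s=w^\top V$, $r=w^\top\dot{y}$, integration by parts on $\int\dot{s}^2$ via $\ddot{s}=\dot{r}-\alpha\dot{s}$, and amortization of the $O(1)$ boundary terms over $N$ consecutive excitation windows, so that the accumulated energy $N\alpha_1 d^2/2$ from (\ref{ydpe}) defeats $\alpha^2 P_N+M_0\sqrt{NT_2}\sqrt{P_N}+C_0$ unless $P_N\geq\kappa N$ --- is sound, and every ingredient is licensed within the paper: Constraint 2.1 gives $\|\ddot{y}\|\leq M_0$, which bounds $\dot{r}$ (and this bound is indeed consistent with the closed loop, since Theorem \ref{lass} bounds all states and Assumption \ref{aa} makes $A(t)$ differentiable), while variation of constants on (\ref{n3}) gives the upper bound in (\ref{pev}). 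The uniformity in $t$ and in the unit vector $w$ is handled correctly, since $\beta$, $C_0$, $\kappa$, and $N_0$ are independent of both, and passing from the scalar bound to $\lambda_{\min}$ of the Gram matrix is legitimate. What each approach buys: the paper's citation is short and reuses machinery already needed for the estimator of (\ref{aloc}), but leaves the theorem unverifiable within the paper itself; your argument is self-contained and elementary, and it makes explicit which constants ($\alpha$, $\alpha_1$, $d$, $T_2$, $M_0$) determine $\alpha_3$ and $T_3$, information the citation hides. One cosmetic remark: the middle term $\alpha\frac{d}{dt}(s^2)$ does not ``telescope'' --- its integral over the full window is just the fundamental theorem of calculus --- but the $O(1)$ bound you extract from it is exactly what the amortization needs.
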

\begin{proof}
Follows directly Lemma \ref{lydpe} and \cite{DandachFidanDasguptaAnderson_SCL_08}. If we permitted $A(t)$ to lose differentiability on a countable set then
the marriage of \cite{DandachFidanDasguptaAnderson_SCL_08} with techniques developed in \cite{DAT} will provide a proof.
\end{proof}
We are now ready to prove exponential converegence of $\xh$ to $x$. Then in view of Lemma \ref{L1}, and fact that
$y(t)$ is bounded, $D(t)$ converges 
exponentially to $d$.
Define $p(t)= \eta (t)- m(t) +V^\top(t)x$, and observe from (\ref{exp}) that $\dot{p}(t)=-\aa p(t)$.
Observe also that $\eta (t)- m(t) +V^\top(t)\xh(t)= p(t)+V^\top(t)\xt (t)$. Thus one obtains:
\be \la{svr}
\left [ \begin{array}{c} \dot{\xt}(t)\\ \dot{p}(t) \end{array} \right ]
=\left [ \begin{array}{cc} -\gamma V(t)V^\top(t) & -\gamma V(t) \\ 0 & -\aa\end{array} \right ]
\left [ \begin{array}{c} \xt(t)\\ p(t) \end{array} \right ].
\ee
\begin{theorem} \la{tmain}
Consider the system defined in (\ref{D}) -  (\ref{claw}),  subject to
the requirement that $\hat{D}(0)>0$ and Assumption \ref{aa}. Then (\ref{svr}) is eas.
\end{theorem}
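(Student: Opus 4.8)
The plan is to exploit the block upper-triangular structure of \eqref{svr} and treat it as a cascade of two separately exponentially stable subsystems coupled through a bounded interconnection. Write the state transition matrix of \eqref{svr} in the corresponding partitioned form
\[
\Phi(t,t_0)=\left[\begin{array}{cc}\Phi_{11}(t,t_0) & \Phi_{12}(t,t_0)\\ 0 & \Phi_{22}(t,t_0)\end{array}\right],
\]
where $\Phi_{11}$ is the transition matrix of $\dot{\xt}=-\gamma V(t)V^\top(t)\xt$, where $\Phi_{22}(t,t_0)=e^{-\aa(t-t_0)}$ solves $\dot p=-\aa p$, and where the variation-of-constants formula gives $\Phi_{12}(t,t_0)=-\gamma\int_{t_0}^t \Phi_{11}(t,\tau)V(\tau)e^{-\aa(\tau-t_0)}d\tau$. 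Uniform exponential stability of \eqref{svr} is then equivalent to a uniform (in $t_0$) exponential bound on each of these three blocks.

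The $\Phi_{22}$ block is trivially exponentially decaying since $\aa>0$. For $\Phi_{11}$ I would first record that $V(t)$ is uniformly bounded: by \eqref{n3}, $V$ is the output of the exponentially stable filter $\dot V=-\aa V+\dot y$ driven by $\dot y$, which is bounded under the standing trajectory constraint, so $\|V(t)\|\le M$ for some $M$ independent of $t$ and $t_0$. Combining this bound with the persistent spanning condition \eqref{pev} established in Theorem \ref{tpe}, the subsystem $\dot{\xt}=-\gamma V(t)V^\top(t)\xt$ is precisely the standard gradient adaptive law whose uniform exponential stability under persistent excitation is a classical result \cite{DandachFidanDasguptaAnderson_SCL_08}; this yields $\|\Phi_{11}(t,t_0)\|\le k_1 e^{-\lambda_1(t-t_0)}$ for constants $k_1,\lambda_1>0$ that are independent of $t_0$.

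It remains to bound $\Phi_{12}$. Substituting the two exponential bounds and $\|V\|\le M$ into the variation-of-constants integral gives
\[
\|\Phi_{12}(t,t_0)\|\le \gamma M k_1\int_{t_0}^t e^{-\lambda_1(t-\tau)}e^{-\aa(\tau-t_0)}d\tau,
\]
and this convolution of two decaying exponentials is itself dominated by a uniformly exponentially decaying function of $t-t_0$, with rate any value strictly below $\min\{\lambda_1,\aa\}$ (absorbing the linear prefactor that arises in the borderline case $\lambda_1=\aa$). Hence $\|\Phi(t,t_0)\|\le k\,e^{-\lambda(t-t_0)}$ uniformly in $t_0$, which is exactly the assertion that \eqref{svr} is eas.

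I expect the main obstacle to be the second step: rigorously converting the persistent spanning condition \eqref{pev} into the uniform exponential bound on $\Phi_{11}$. Although this conclusion is well known, it is not captured by a naive quadratic Lyapunov function, since the decay only becomes visible after integrating over one excitation window $T_3$; care is therefore needed to invoke the correct persistence-of-excitation lemma and to verify that its constants do not depend on the initial time. Once that bound and the uniform boundedness of $V$ are secured, the remaining cascade estimate is routine.
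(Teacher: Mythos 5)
Your proposal is correct and takes essentially the same route as the paper: persistent excitation of $V$ from Theorem \ref{tpe} is combined with the classical result that the gradient system $\dot{\zeta}=-\gamma V V^\top \zeta$ is exponentially stable under p.e., and the conclusion then follows from the upper-triangular structure of (\ref{svr}). You merely make explicit, via the partitioned transition matrix and the variation-of-constants bound on $\Phi_{12}$ (including the borderline rate case), the cascade step that the paper compresses into the remark that ``the triangular nature of (\ref{svr}) establishes the result.''
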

\begin{proof}
From Theorem \ref{tpe}, (\ref{pev}) holds. Consequently, from
\cite{bda77}, the system  
\[
\dot{\zeta}=-\gamma VV^\top \zeta
\]
is eas. Then the triangular nature of (\ref{svr}) establishes the result.
\end{proof}
\setcounter{equation}{0}
\section{Choosing $A(t)$} \la{sA}
In this section we focus on selection of $A(t)$ to satisfy Assumption \ref{aa}. Consider first $n=2$, we show that with $E$ as in (\ref{E}) the matrix $A(t)=aE$ obeys
the requirements of assumption \ref{aa}. Indeed consider the Lemma below.

\begin{lemma} \la{ls2s}
With $E$ as in (\ref{E}), and a real scalar nonzero $a$ consider \be \la{Ea}
\dot{\xi}(t)=aE\xi(t),
\ee
with $\xi:\mR\rightarrow \mR^2$.  Denote  $\xi=[\xi_1,\xi_2]^\top$. Define $\beta$ as 
\be \la{bb}
\bb(t_0) =\angle (\xi_1+\boldsymbol{i}\xi_2).
\ee
i.e. the argument of the complex number $\xi_1+\boldsymbol{i}\xi_2$. Then there holds for all $t\geq t_0\geq 0$,
\be \la{yss}
\xi(t)= \|\xi(t_0) \| [\cos(a(t-t_0)+\bb(t_0)), \sin(a(t-t_0)+\bb(t_0))]^\top.
\ee
\end{lemma}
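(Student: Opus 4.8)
The plan is to solve the planar constant-coefficient system (\ref{Ea}) in closed form and then read off its amplitude and phase. The crucial algebraic observation is that the matrix $E$ in (\ref{E}) satisfies $E^2=-I$, so within this $2\times 2$ world $E$ behaves exactly like the imaginary unit. This is what allows the matrix exponential generating the flow of (\ref{Ea}) to collapse into ordinary trigonometric functions. I would exploit this fact in either of two equivalent ways, and present whichever reads more cleanly.

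First, the complex-variable route. Writing $w(t)=\xi_1(t)+\boldsymbol{i}\xi_2(t)$ and differentiating along (\ref{Ea}) (i.e. $\dot{\xi}_1=a\xi_2$ and $\dot{\xi}_2=-a\xi_1$), one finds that $w$ obeys the scalar linear equation $\dot{w}(t)=-\boldsymbol{i}a\,w(t)$, whose solution is $w(t)=w(t_0)e^{-\boldsymbol{i}a(t-t_0)}$. By the definition (\ref{bb}) of $\bb(t_0)$ as the argument of $w(t_0)$, together with $|w(t_0)|=\|\xi(t_0)\|$, I may write $w(t_0)=\|\xi(t_0)\|e^{\boldsymbol{i}\bb(t_0)}$, so that $w(t)=\|\xi(t_0)\|e^{\boldsymbol{i}(\bb(t_0)-a(t-t_0))}$. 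Its real and imaginary parts are then sinusoids in $t$ with constant amplitude $\|\xi(t_0)\|$ and a phase that is affine in $t$, which is exactly the form asserted in (\ref{yss}).

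Alternatively, and perhaps cleaner to present in the paper's notation, I would compute the state transition matrix directly. Because $E^2=-I$, grouping the even and odd powers in the series for $e^{aE(t-t_0)}$ gives $e^{aE(t-t_0)}=\cos(a(t-t_0))I+\sin(a(t-t_0))E$, whence $\xi(t)=e^{aE(t-t_0)}\xi(t_0)$. Substituting $\xi_1(t_0)=\|\xi(t_0)\|\cos\bb(t_0)$ and $\xi_2(t_0)=\|\xi(t_0)\|\sin\bb(t_0)$ from (\ref{bb}) and applying the angle-addition identities for $\cos$ and $\sin$ merges the two components into a single linear phase $\pm a(t-t_0)+\bb(t_0)$, yielding (\ref{yss}).

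The computation is routine, so there is no deep obstacle; the only care-points are bookkeeping ones. The first is the direction of rotation, i.e. the sign of the $a(t-t_0)$ term in the phase, which is fixed by the precise form of $E$ in (\ref{E}) and by the sign of the (arbitrary nonzero) scalar $a$; this must be tracked carefully rather than guessed. The second is confirming that the amplitude $\|\xi(t)\|$ is genuinely constant and equal to $\|\xi(t_0)\|$, so that it may be factored out as written. This follows from the skew symmetry of $E$: since (\ref{skew}) gives $\xi^\top(aE)\xi=0$, one has $\tfrac{d}{dt}\|\xi(t)\|^2=2\xi^\top(t)(aE)\xi(t)=0$, which both justifies the constant amplitude in (\ref{yss}) and is consistent with the norm preservation already recorded in Lemma \ref{lorth}.
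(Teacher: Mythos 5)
Your method is, in substance, exactly the paper's: the published proof consists of precisely your second route, namely the polar factorization $\xi(t_0)=\|\xi(t_0)\|[\cos\beta(t_0),\sin\beta(t_0)]^\top$ together with the closed-form rotation state transition matrix, and your complex-scalar route is the same computation repackaged, since under $w=\xi_1+\boldsymbol{i}\xi_2$ the matrix $E$ of (\ref{E}) acts as multiplication by $-\boldsymbol{i}$ (equivalently, $E^2=-I$). Your norm-preservation check via (\ref{skew}) is correct and consistent with Lemma \ref{lorth}.

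There is, however, one genuine loose end: you flagged the rotation sign as a ``care-point'' and then wrote the final phase as $\pm a(t-t_0)+\beta(t_0)$, which does not prove a lemma asserting a definite sign. Carrying your own computation to completion settles it: since $E^2=-I$,
\[
e^{aE(t-t_0)}=\cos\bigl(a(t-t_0)\bigr)I+\sin\bigl(a(t-t_0)\bigr)E
=\left[\begin{array}{cc}\cos a(t-t_0) & \sin a(t-t_0)\\ -\sin a(t-t_0) & \cos a(t-t_0)\end{array}\right],
\]
and your complex route gives $w(t)=\|\xi(t_0)\|e^{\boldsymbol{i}(\beta(t_0)-a(t-t_0))}$, so the solution of (\ref{Ea}) is
\[
\xi(t)=\|\xi(t_0)\|\,[\cos(\beta(t_0)-a(t-t_0)),\;\sin(\beta(t_0)-a(t-t_0))]^\top,
\]
i.e.\ (\ref{yss}) with $a$ replaced by $-a$. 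Your correct calculation therefore \emph{disagrees} in sign with the lemma as printed; the transition matrix displayed in the paper's own proof is in fact $e^{-aEt}=e^{aE^\top t}$ for the $E$ of (\ref{E}), so the discrepancy is a sign slip in the paper, not in your algebra. It is harmless: $a$ is an arbitrary nonzero real, so relabelling $a\mapsto-a$ restores (\ref{yss}), and every downstream use (the verification of (\ref{ps}), the angular speed $|a|$ of the circumnavigation, Lemma \ref{lp}) depends only on $|a|$. The fix needed on your side is simply to commit to the sign dictated by (\ref{E}) and state the result accordingly, rather than leaving a $\pm$ in the conclusion.
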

\begin{proof}
Follows from the facts that $\xi(t_0)=\|\xi(t_0) \|[\cos(\bb(t_0)), \sin(\bb(t_0))]^\top$, and that 
the state transition matrix corresponding to (\ref{Ea}) is:
\[
e^{aEt}=\left[ \begin{array}{ccc} \cos at & & -\sin at\\\sin at & & \cos at \end{array}\right ].
\]
\end{proof}
The fact that (\ref{yss}) satisfies (\ref{ps})
with $y^*$ identified with $\xi$, is trivial to check. It is also clear that under this selection, $y(t)$ 
circumnavigates $x$ with an angular speed of $|a|$. 

In preparation for treating the $n=3$ case, 
we make the following observations.
\begin{lemma} \la{lp}
Consider (\ref{yss}). Suppose for any $t_0$, all $t\in \left [t_0,t_0+ \frac{\pi}{2|a|} \right]$,  some $\theta \in \mR^2$, there exists $\epsilon$ such that $|\theta^\top\dot{\xi}(t)| \leq \epsilon \|\theta \|$. Then 
\be \la{eg}
\epsilon \geq |a|\|\xi(t_0)\|\|\theta \|.
\ee
Further with $\xi=[\xi_1,\xi_2]^\top$, for all $t_0$ and $i\in\{1,2\}$,
\be \la{yabs}
\left | \xi_i\left (t_0 +   \frac{\pi}{2|a|} \right ) - \xi_i
\left (t_0  \right ) \right |  = \|\xi(t_0)\|.
\ee
\end{lemma}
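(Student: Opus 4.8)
The plan is to derive both claims directly from the closed-form solution (\ref{yss}), since that expression makes the geometry completely explicit: $\xi(t)$ traces a circle of radius $\|\xi(t_0)\|$ at constant angular speed $|a|$, so its velocity $\dot\xi(t)$ is a vector of constant magnitude that rotates uniformly. Both (\ref{eg}) and (\ref{yabs}) then become statements about how a uniformly rotating vector, respectively the position itself, moves over the quarter period $\pi/(2|a|)$.

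For (\ref{eg}), I would first differentiate (\ref{yss}) to obtain
\[
\dot\xi(t) = a\|\xi(t_0)\|\bigl[-\sin(a(t-t_0)+\bb(t_0)),\ \cos(a(t-t_0)+\bb(t_0))\bigr]^\top,
\]
so that $\|\dot\xi(t)\| = |a|\|\xi(t_0)\|$ for all $t$. Writing $\theta=[\theta_1,\theta_2]^\top$ and collapsing the inner product into a single sinusoid, I get
\[
\theta^\top\dot\xi(t) = a\|\xi(t_0)\|\,\|\theta\|\cos\bigl(a(t-t_0)+\bb(t_0)+\chi\bigr),
\]
for a phase $\chi$ depending only on $\theta$; its amplitude is exactly $|a|\|\xi(t_0)\|\|\theta\|$. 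As $t$ runs over $[t_0,t_0+\pi/(2|a|)]$ the argument sweeps a quarter turn, so the projection cannot remain below its amplitude throughout the window; hence the uniform bound $\epsilon\|\theta\|$ must be at least that amplitude, which is precisely (\ref{eg}).

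For (\ref{yabs}), I would simply evaluate (\ref{yss}) at $t=t_0$ and at $t=t_0+\pi/(2|a|)$. Advancing time by the quarter period adds $a\cdot\pi/(2|a|)=\mathrm{sign}(a)\,\pi/2$ to the phase, which interchanges the roles of $\cos$ and $\sin$ (up to sign). Substituting and simplifying with $\cos^2+\sin^2=1$ then reduces the relevant combination to $\|\xi(t_0)\|$, giving the claim for each component $i\in\{1,2\}$.

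The routine parts are the two substitutions and the sinusoid collapse. The step that needs genuine care is the extremal argument behind (\ref{eg}): one must verify that on a window whose phase span is exactly $\pi/2$ the rotating projection attains (the relevant fraction of) its full amplitude $|a|\|\xi(t_0)\|\|\theta\|$, and here the bookkeeping of the sign of $a$ and of the closed-interval endpoints matters, since a generic quarter-period window need not contain the exact peak of the sinusoid. This is where the constant-magnitude, uniform-rotation structure of $\dot\xi$ is essential, as it is exactly what prevents the projection onto any fixed direction $\theta$ from staying small across the quarter period.
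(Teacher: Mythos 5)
Your route is the same as the paper's: collapse $\theta^\top\dot{\xi}$ into a single sinusoid of amplitude $|a|\|\xi(t_0)\|\|\theta\|$ (the paper does this by writing $\theta=\|\theta\|[\sin\psi,-\cos\psi]^\top$, so that $\theta^\top E\xi(t)=\|\xi(t_0)\|\|\theta\|\cos(a(t-t_0)+\bb(t_0)-\psi)$), then make an extremal argument for (\ref{eg}) and substitute directly for (\ref{yabs}). But the two steps you wave through are exactly the ones that fail. For (\ref{eg}), your assertion that ``the projection cannot remain below its amplitude throughout the window'' is false for a single fixed quarter-period window: if the phase $a(t-t_0)+\bb(t_0)+\chi$ sweeps $[\pi/4,\,3\pi/4]$, then $|\cos|\le 1/\sqrt{2}$ throughout, so the hypothesis holds with $\epsilon=\tfrac{1}{\sqrt{2}}|a|\|\xi(t_0)\|\|\theta\|$, contradicting (\ref{eg}). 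Your closing paragraph concedes precisely this (``a generic quarter-period window need not contain the exact peak''), which contradicts rather than repairs the step you rely on. The repair is quantifier-based: the hypothesis is imposed for \emph{every} $t_0$, so the phase sweeps all of $\mR$ and the full amplitude is attained somewhere, while $\|\xi(t)\|$ is constant so the right-hand side of (\ref{eg}) is $t_0$-independent; alternatively, in the paper's actual use of the lemma (Theorem \ref{t3pers}) the bound is available on half-period windows such as $[\bar{T}_4,\bar{T}_5]$ of length $\pi/|c|$, over which $|\cos|$ is guaranteed to attain $1$. Note the paper's own proof commits the identical sin---it flatly claims the maximum over any single quarter-period interval equals the full amplitude---so on this point you have faithfully reproduced the paper's argument, gap included.

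For (\ref{yabs}) the gap is worse: carrying out the substitution you describe does \emph{not} reduce to $\|\xi(t_0)\|$. From (\ref{yss}), for $a>0$,
\[
\left|\xi_1\left(t_0+\tfrac{\pi}{2|a|}\right)-\xi_1(t_0)\right|
=\|\xi(t_0)\|\,\left|\sin\bb(t_0)+\cos\bb(t_0)\right|
=\sqrt{2}\,\|\xi(t_0)\|\,\left|\sin\left(\bb(t_0)+\tfrac{\pi}{4}\right)\right|,
\]
which ranges over $[0,\sqrt{2}\|\xi(t_0)\|]$ and equals $\|\xi(t_0)\|$ only when $\bb(t_0)$ is an integer multiple of $\pi/2$; e.g.\ $\bb(t_0)=\pi/4$ gives $|\Delta\xi_1|=\sqrt{2}\|\xi(t_0)\|$ and $|\Delta\xi_2|=0$. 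What $\cos^2+\sin^2=1$ actually yields is $|\Delta\xi_1|^2+|\Delta\xi_2|^2=2\|\xi(t_0)\|^2$, hence $\max_{i}|\Delta\xi_i|\ge\|\xi(t_0)\|$ for \emph{at least one} $i$---an inequality for some component, not the stated equality for each component and each $t_0$. So an honest execution of your own plan would have disproved (\ref{yabs}) as literally stated rather than proved it; the paper hides this behind the one-liner ``direct consequence of (\ref{yss})'', and the max-inequality above is the statement its subsequent argument (e.g.\ (\ref{inter})) can legitimately invoke.
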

\begin{proof} For some real $\psi$ there holds $\theta =\|\theta \|[\sin\psi,-\cos\psi]^\top$.
Hence, $E^\top\theta =\|\theta \|[\cos\psi,\sin\psi]^\top$. Thus under (\ref{yss}), 
\begin{equation}\label{eq:ztheta}
\theta^\top E \xi(t)= \|\xi(t_0)\|\|\theta \|\cos(a(t-t_0)+\bb(t_0)-\psi).
\end{equation}
Therefore, on any interval $[t_0,t_0+\pi/2|a|]$ the maximum of $|\theta^\top \dot{\xi}|=|a\theta^\top E\xi|$ is  $|a|||\xi(t_0)||||\theta||$. Further (\ref{yabs}) is a direct consequence of (\ref{yss}).
\end{proof}
Finally we prove the following lemma.
\begin{lemma} \la{lrho}
Consider 
\be \la{zf}
\dot{\xi}(t)=af(t)E\xi(t),
\ee
where $f:\mR\rightarrow\mR$ and $|f(t)| \leq 1\;\;\; \forall t$.
Then for all $0\leq t_0 \leq t$, $\| \xi(t)- \xi(t_0)\|  \leq (t-t_0)|a|  \left\|z(t_0)\right\|$.
\end{lemma}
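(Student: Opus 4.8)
The plan is to exploit the skew symmetry of the generating matrix $af(t)E$ to first pin down the norm of $\xi$ along trajectories, and then bound the displacement $\xi(t)-\xi(t_0)$ by a direct integral estimate. First I would observe that since $E$ in (\ref{E}) is skew symmetric, so is $af(t)E$ for every $t$; hence by (\ref{skew}),
\[
\frac{d}{dt}\|\xi(t)\|^2 = 2\xi^\top(t)\dot{\xi}(t) = 2af(t)\,\xi^\top(t)E\xi(t) = 0.
\]
Thus $\|\xi(t)\| = \|\xi(t_0)\|$ for all $t$, exactly as in the norm-preservation argument underlying Lemma \ref{lorth}. This constancy of the radius is what makes the whole estimate collapse to a trivial one.

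Next I would write the displacement as the integral of the velocity and apply the triangle inequality for integrals:
\[
\|\xi(t)-\xi(t_0)\| = \left\| \int_{t_0}^{t} af(\tau)E\xi(\tau)\,d\tau \right\| \leq \int_{t_0}^{t} |a|\,|f(\tau)|\,\|E\xi(\tau)\|\,d\tau.
\]
Since $E$ is orthogonal, $\|E\xi(\tau)\| = \|\xi(\tau)\|$; since $|f(\tau)| \leq 1$ by hypothesis; and since $\|\xi(\tau)\| = \|\xi(t_0)\|$ by the first step, the integrand is uniformly bounded by $|a|\,\|\xi(t_0)\|$. Integrating this constant over $[t_0,t]$ yields the claimed bound $\|\xi(t)-\xi(t_0)\| \leq (t-t_0)|a|\,\|\xi(t_0)\|$.

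There is no genuine obstacle here; once norm preservation is established the result is a one-line estimate requiring neither Gr\"onwall's inequality nor an explicit solution formula. The only points worth flagging are editorial and as a sanity check: the right-hand side of the statement reads $\|z(t_0)\|$, which is plainly a typographical slip for $\|\xi(t_0)\|$ (no $z$ appears in this lemma), and the bound is consistent with the exact motion, since for $f\equiv 1$ the solution traces a circle of radius $\|\xi(t_0)\|$ at angular speed $|a|$, so the chord length over $[t_0,t]$ is at most the arc length $|a|(t-t_0)\|\xi(t_0)\|$.
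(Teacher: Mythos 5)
Your proof is correct and follows essentially the same route as the paper's: norm preservation of $\xi$ under the skew-symmetric generator $af(t)E$, followed by writing $\xi(t)-\xi(t_0)$ as the integral of the velocity and bounding it by the triangle inequality. You merely make explicit the steps the paper leaves implicit (the derivative computation for $\|\xi\|^2$ and the use of $\|E\xi\|=\|\xi\|$), and your observation that $\|z(t_0)\|$ in the statement is a typo for $\|\xi(t_0)\|$ is accurate.
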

\begin{proof}
Under  (\ref{zf}), for all $t\geq t_0$, $\|\xi(t)\|=\|\xi(t_0)\|$. Thus, 
\begin{eqnarray*}
\| \xi(t)-\xi(t_0)\|  &=&
 \| \int_{t_0}^{t}  f(\tau) a E \xi(\tau) d\tau \|\\
&\leq&  (t-t_0)|a| \left\|\xi(t_0)\right\|
\end{eqnarray*}
\end{proof}
To address the $n=3$ case we first preclude the possibility that $A(t)$ can be a constant matrix. Indeed
observe that no skew-symmetric matrix in $\mR^{3\times 3}$ can be nonsingular, as if $\lambda$ is an
eigenvalue of a skew symmetric matrix then so is $-\lambda$. Thus for any odd $n$, an $n\times n$ skew symmetric matrix must
have a zero eigenvalue. To complete the argument we present the following Lemma.
\begin{lemma} \la{noconst}
Suppose in (\ref{y*}) $A(t)\equiv A$ for all $t$ and $A$ is singular. Then (\ref{ps}) cannot hold.
\end{lemma}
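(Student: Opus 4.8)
The plan is to exploit the fact that a singular $A$ has a nontrivial kernel, and to show that placing the initial condition of (\ref{y*}) inside this kernel yields a \emph{stationary} solution. Such a solution has identically zero derivative, which immediately collides with the strictly positive lower bound in (\ref{ps}). Since Assumption \ref{aa}(iv) demands that (\ref{ps}) hold for \emph{every} nonzero $y^*(0)$, producing a single offending initial condition suffices to refute it.

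Concretely, I would first invoke singularity of $A$ to obtain a nonzero $v\in\mR^n$ with $Av=0$, and set $y^*(0)=v$. Because $A$ is constant, the solution is $y^*(t)=e^{At}v$, and since $A^k v=0$ for all $k\ge 1$, the exponential series collapses to $e^{At}v=v$. Hence $y^*(t)\equiv v$ and $\dot y^*(t)\equiv 0$ for all $t$. Consequently the integrand $\dot y^*(\tau)\dot y^*(\tau)^\top$ vanishes identically, so that
\[
\int_{t}^{t+T_1}\dot y^*(\tau)\dot y^*(\tau)^\top\,d\tau = 0
\]
for every $t\ge 0$ and every $T_1>0$. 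On the other hand $\|y^*(t)\|^2=\|v\|^2>0$, so the lower inequality in (\ref{ps}) would require $\aa_1\|v\|^2 I\le 0$, which is impossible whenever $\aa_1>0$. This contradiction establishes the claim.

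There is essentially no analytical obstacle here; the construction is elementary once the kernel vector is identified. The only point warranting care is logical rather than computational: (\ref{ps}) is a universal statement quantified over all nonzero initial conditions, so the argument must exhibit one bad choice (the kernel vector) and then verify that the resulting trajectory is \emph{genuinely} stationary for all $t$, not merely one whose derivative happens to vanish at the initial instant. The constancy of $A$ together with the invariance of $\ker A$ under $e^{At}$ is precisely what guarantees this, and I would state it explicitly to close the argument cleanly.
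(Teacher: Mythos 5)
Your proof is correct and takes essentially the same route as the paper's: both exhibit a nonzero vector in the kernel of $A$ as the initial condition, observe that the resulting trajectory is constant (the paper phrases this as $e^{At}$ having an eigenvalue at one), and conclude that $\dot{y}^*\equiv 0$ contradicts the strict lower bound in (\ref{ps}). Your explicit power-series verification that $e^{At}v=v$ simply spells out a step the paper leaves implicit.
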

\begin{proof}
If $A$ is singular, then $e^{At}$ has an eigenvalue
at one. Thus there exists a $y^*(0)$ such that for all $t$, $y^*(t)=e^{At}y^*(0)$ is a constant, i.e. for this $y^*(0)$,
$\dot{y}^*\equiv 0$.
\end{proof}
Thus, we must search for a periodic $A(t)$ to meet the requirements of Assumption \ref{aa}. Effectively, the $A(t)$
 we will choose will
switch periodically between the two $3\times 3$ matrices
\be \la{B}
B=0\oplus (bE),\mbox{ and}
\ee
\be \la{C}
C=(cE)\oplus 0
\ee
$b$ and $c$ being real nonzero scalars, and $\oplus$ denoting direct sum. However to ensure that the resulting matrix is
differentiable, we require
a differentiable transition between $B$ and $C$. To achieve this define a nondecreasing $g:\mR\rightarrow\mR$, that obeys:
\be \la{g1}
 g(t)=0 \;\;\;\forall \; t\leq 0 
\ee
\be \la{g2}
 g(t)=1, \;\;\;\forall \;  t\geq 1, \mbox{ and}
\ee
\be \la{g3}
 g(t)\mbox{ is twice differentiable } \forall t.
\ee
An example of such a $g(t)$ is
\be \la{gt}
g(t)=\left \{ \begin{array}{ll} \frac{1}{2} \left ( 1-\cos\left (\pi t \right ) \right ) & 0\leq t \leq 1\\
0& t< 0\\
1 & t>1. \end{array} \right .
\ee
Clearly this satisfies (\ref{g1}) and (\ref{g2}). Further (\ref{g3}) holds as
\[
\lim_{t\rightarrow 0_+}\dot{g}(t) = \lim_{t\rightarrow 1_-}\dot{g}(t)=0.
\]
Now, for nonzero scalars $b$ and $c$, we will select $A(t)$ as follows. For a suitably small $\rho >0$,
define
\be \la{T13}
\bar{T}_1= \rho, \;\; \bar{T}_2=  \rho   + \frac{\pi}{|b|}, \;\; \bar{T}_3=  2\rho   + \frac{\pi}{|b|},
\ee
and
 \be \la{T46}
\bar{T}_4=  3\rho   + \frac{\pi}{|b|}, \;\; \bar{T}_5=  3\rho   + \frac{\pi}{|b|} +\frac{\pi}{|c|},  \;\; 
T=\bar{T}_6
=4\rho   + \frac{\pi}{|b|} +\frac{\pi}{|c|}.  \ee
For all $t$, let $K_T(t)$ denote the largest integer $k$ satisfying $t\geq kT$ and let $r_T(t)=t-K_T(t)T$. Then define $A(t)$ as
\be \la{a3}
A(t)=\left \{ \begin{array}{ll} g\left (\frac{t}{\rho} \right ) B & 0\leq r_T(t) \leq \bar{T}_1 \\
B & \bar{T}_1 \leq r_T(t) \leq \bar{T}_2 \\
\left (1- g\left (\frac{t-\bar{T}_2  }{\rho} \right )\right) B\;\;\;&  \bar{T}_2\leq r_T(t)\leq \bar{T}_3\\
g\left (\frac{t-\bar{T}_3  }{\rho} \right ) C\;\;\;&  \bar{T}_3\leq r_T(t)\leq \bar{T}_4\\
C & \bar{T}_4 \leq r_T(t) \leq \bar{T}_5 \\
\left (1- g\left (\frac{t-\bar{T}_5  }{\rho} \right )\right) C\;\;\;&  \bar{T}_5\leq r_T(t)\leq \bar{T}_6=T\\
\end{array} \right .
\ee
Observe that (\ref{a3}) automatically satisfies (i-iii) of Assumption \ref{aa}. To show that it satisfies (iv) as well, we present the following result from \cite{mit}.
\begin{lemma} \la{Lmit}
Suppose on a closed interval $ {\mathcal{I}} \subset \mR $ of length $\Omega$, a signal 
$w: {\mathcal{I}} \rightarrow \mR  $ is twice differentiable and for some $\eps_1$ and $M'$
\[
|w(t)| \leq \eps _1 \mbox{ and } |\ddot{w}(t)|\leq M'\;\;\; \forall \;\;\; t\in {\mathcal{I}}.
\]
Then for some $M$ independent of $\eps_1$,  ${\mathcal{I}}$ and  $M'$, and $M'' = \max (M', 2 \epsilon_1 \Omega ^{-2})$ one has: 
\[
|\dot{w}(t)| \leq  M(M''\eps_1)^{1/2}\;\;\; \forall \;\;\; t\in {\mathcal{I}}.
\]
\end{lemma}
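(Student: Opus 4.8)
The plan is to prove this via a local Taylor expansion combined with an optimization over the expansion step, which is the standard device behind Landau--Kolmogorov type interpolation inequalities. Fix an arbitrary $t \in \mathcal{I}$. Since $\mathcal{I}$ has length $\Omega$, at least one of the forward or backward directions from $t$ remains inside $\mathcal{I}$ for a length of at least $\Omega/2$; without loss of generality suppose $[t, t+h] \subset \mathcal{I}$ for every $h \in (0, \Omega/2]$, the backward case being symmetric. Because $w$ is twice differentiable, Taylor's theorem with the Lagrange remainder gives, for some $\xi \in (t, t+h)$,
\[
w(t+h) = w(t) + \dot{w}(t)h + \tfrac{1}{2}\ddot{w}(\xi)h^2,
\]
and solving for $\dot{w}(t)$ and invoking the two hypotheses $|w| \leq \epsilon_1$ and $|\ddot{w}| \leq M'$ yields
\[
|\dot{w}(t)| \leq \frac{|w(t+h)| + |w(t)|}{h} + \frac{1}{2}|\ddot{w}(\xi)|h \leq \frac{2\epsilon_1}{h} + \frac{M' h}{2}.
\]

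The next step is to minimize the right-hand side over the admissible range $h \in (0, \Omega/2]$. The unconstrained minimizer is $h^\star = 2\sqrt{\epsilon_1/M'}$, at which the bound equals $2\sqrt{M' \epsilon_1}$. If $h^\star \leq \Omega/2$ (equivalently $M' \geq 16\epsilon_1/\Omega^2$, so that $M'' = M'$), choosing $h = h^\star$ gives at once $|\dot{w}(t)| \leq 2(M' \epsilon_1)^{1/2} = 2(M'' \epsilon_1)^{1/2}$. If instead $h^\star > \Omega/2$, the constrained optimum sits at the endpoint $h = \Omega/2$; substituting there and using $M' < 16\epsilon_1/\Omega^2$ in this regime bounds both terms by $4\epsilon_1/\Omega$, so that $|\dot{w}(t)| \leq 8\epsilon_1/\Omega$. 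Here the definition $M'' = \max(M', 2\epsilon_1\Omega^{-2})$ forces $M'' \geq 2\epsilon_1/\Omega^2$, hence $(M'' \epsilon_1)^{1/2} \geq \sqrt{2}\,\epsilon_1/\Omega$, and the bound rewrites as $|\dot{w}(t)| \leq 4\sqrt{2}\,(M'' \epsilon_1)^{1/2}$.

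Taking the larger of the two constants, $M = 4\sqrt{2}$ serves uniformly, and since it was never tied to $\epsilon_1$, $M'$, or the particular interval, it is independent of these as claimed. The point that requires care---and the reason the auxiliary quantity $M''$ appears at all---is precisely the finiteness of $\mathcal{I}$: when $M'$ is so small that the optimal step $h^\star$ overshoots the half-length $\Omega/2$, the clean bound $2\sqrt{M'\epsilon_1}$ becomes unattainable and must be replaced by one governed by $\Omega$, and the surrogate $2\epsilon_1\Omega^{-2}$ inside $M''$ is exactly what plays the role of $M'$ in that regime. I would therefore present the forward/backward reachability observation explicitly, since it is what guarantees a usable step $h$ up to $\Omega/2$ at every point of $\mathcal{I}$ including its endpoints, and keep the two-case split intact, as collapsing it prematurely is where the constant bookkeeping tends to break.
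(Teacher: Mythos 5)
Your proof is correct. Note that the paper itself gives no proof of this lemma---it is imported verbatim from the reference cited as \cite{mit}---so there is no in-paper argument to compare against; your Taylor-expansion-with-optimized-step argument is the standard route to such Landau--Kolmogorov type bounds and is almost certainly what the cited source does. The details check out: the forward/backward reachability observation legitimately secures a step of length $\Omega/2$ at every point including endpoints, the two-regime split at $h^{\star}=2\sqrt{\epsilon_1/M'}$ is handled with correct bookkeeping (the bound $2(M'\epsilon_1)^{1/2}$ when $h^{\star}\leq \Omega/2$, and $8\epsilon_1/\Omega \leq 4\sqrt{2}\,(M''\epsilon_1)^{1/2}$ in the endpoint-constrained regime, using $M''\geq 2\epsilon_1\Omega^{-2}$), and your closing remark correctly identifies the role of the surrogate term in $M''$ as compensating for the finite interval length; $M=4\sqrt{2}$ is a valid universal constant.
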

Next, we establish the following result.
\begin{theorem} \la{t3pers}
Consider (\ref{y*}) with $A(t)$ defined in (\ref{T13})-(\ref{a3}). Then for every pair of nonzero $b,c$
there
exists a $\rho^*$ such that 
(\ref{ps}) holds
 for all $0<\rho\leq \rho^*$.
\end{theorem}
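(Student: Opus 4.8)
The plan is to verify \eqref{ps} directly for the switching law \eqref{a3}. Since $A(t)$ is skew symmetric, \eqref{skew} gives $\frac{d}{dt}\|y^*\|^2=2y^{*\top}A(t)y^*=0$, so $\|y^*(\tau)\|$ is constant along \eqref{y*} (the content of Lemma \ref{lorth}); as both sides of \eqref{ps} are quadratically homogeneous in $y^*(t)$, it suffices to treat unit trajectories $\|y^*\|\equiv1$. The upper bound is then immediate, since $g\in[0,1]$ forces $\|A(t)\|\le\max(|b|,|c|)$ and hence $\dot y^*\dot y^{*\top}\preceq\max(|b|,|c|)^2 I$, giving $\aa_2=T_1\max(|b|,|c|)^2$. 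For the lower bound I would fix the window length $T_1=3T$, so that every $[t_0,t_0+T_1]$ contains two full periods, and in particular two complete pure-$B$ and two complete pure-$C$ stages. Writing $J(t_0,y_0,\theta)=\int_{t_0}^{t_0+T_1}(\theta^\top\dot y^*(\tau))^2\,d\tau$ for the solution of \eqref{y*} with $y^*(t_0)=y_0$, periodicity and continuity of $A(t)$ make $J$ continuous on the compact set $[0,T]\times S^2\times S^2$ with $\inf_{t_0\ge0}J=\min_{t_0\in[0,T]}J$; since a positive continuous function on a compact set has a positive minimum, it suffices to show $J>0$ everywhere, that is, to rule out a unit $\theta$ and a window on which $\theta^\top\dot y^*\equiv0$, and the resulting minimum serves as $\aa_1$.

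So suppose $\theta^\top\dot y^*\equiv0$ on the window. On a pure-$B$ stage $(y_2,y_3)$ undergoes the planar rotation of Lemma \ref{ls2s} at rate $b$ while $y_1$ is frozen, and $\theta^\top\dot y^*=b(\theta_2 y_3-\theta_3 y_2)$; since the stage exceeds a quarter period, Lemma \ref{lp} (with $\epsilon\to0$ in \eqref{eg}) forces $r_{23}\theta_{23}=0$, where $r_{23}=(y_2^2+y_3^2)^{1/2}$ at the stage start and $\theta_{23}=(\theta_2^2+\theta_3^2)^{1/2}$; the pure-$C$ stages likewise give $r_{12}\theta_{12}=0$. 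Because $\theta_{23}^2+\theta_{12}^2=1+\theta_2^2\ge1$, at least one of $\theta_{23},\theta_{12}$ is nonzero, and I would split into cases. In the generic case $\theta_{12},\theta_{23}>0$ one needs $r_{23}=0$ (hence $y^*=\pm e_1$) at some pure-$B$ start and $r_{12}=0$ (hence $y^*=\pm e_3$) at the pure-$C$ start of the same period; but $y^*$ rests on the $B$-axis throughout that pure-$B$ stage and then crosses only two ramps, so Lemma \ref{lrho} bounds its displacement by $\rho(|b|+|c|)$, which is below $\sqrt2=\mathrm{dist}(\pm e_1,\pm e_3)$ once $\rho$ is small, and $y^*$ cannot reach $\pm e_3$ -- a contradiction.

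The delicate cases are $\theta=\pm e_1$ and $\theta=\pm e_3$. For $\theta=\pm e_1$ the pure-$B$ constraint is vacuous and the pure-$C$ constraint forces $y^*=\pm e_3$ at the start of every pure-$C$ stage, so I would propagate the state over one period. From $\sigma e_3$ (with $\sigma=\pm1$) the point is frozen through the pure-$C$ stage and the trailing $C$-ramp, since it lies on the $C$-axis; the following $B$-block then rotates $(y_2,y_3)$ through the cumulative angle $\mathrm{sgn}(b)\pi+b\rho$ (a half turn from the pure stage, plus a ramp contribution $b\rho\int_0^1 g+b\rho\int_0^1(1-g)=b\rho$ that is independent of the admissible $g$), sending $(0,\sigma)\mapsto(-\sigma\sin b\rho,-\sigma\cos b\rho)$ with $y_1$ still $0$. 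Because the subsequent $C$-ramp preserves $r_{12}$, the next pure-$C$ stage starts with $r_{12}=|\sin b\rho|>0$ whenever $0<\rho<\pi/|b|$, contradicting $r_{12}=0$. The case $\theta=\pm e_3$ is symmetric under interchange of $(b,c)$ and of the two fixed axes, yielding $r_{23}=|\sin c\rho|>0$ for $0<\rho<\pi/|c|$. Taking $\rho^*<\min\{\pi/\max(|b|,|c|),\,1/(\sqrt2\max(|b|,|c|))\}$ makes all three cases contradictory, so $J>0$ on the compact set and \eqref{ps} holds for every $0<\rho\le\rho^*$.

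I expect these boundary directions to be the main obstacle, and they are precisely where the periodic switching is indispensable: by Lemma \ref{noconst} a single constant rotation never excites its own axis, and only the small over-rotation $b\rho$ (resp.\ $c\rho$) supplied by the differentiable ramps injects energy into that axis at the next stage, so the full period must be tracked rather than a single stage. The argument above is purely qualitative -- it only needs $\theta^\top\dot y^*\not\equiv0$ -- which is why the compactness step carries the quantitative load; alternatively one can keep explicit constants throughout, using \eqref{yabs} together with the Landau--Kolmogorov estimate of Lemma \ref{Lmit} to convert the pointwise excitation just established into the integral lower bound \eqref{ps} directly.
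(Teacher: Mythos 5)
Your proof is correct, and it takes a genuinely different route from the paper's. The paper argues by contradiction at the \emph{approximate} level: assuming (\ref{ps}) fails, it invokes Lemma \ref{Lmit} to convert a small energy integral into the pointwise bound $|\theta^\top\dot{y}^*(t)|\leq M_3\epsilon_2^{1/2}$ on a long window, then propagates explicit $\epsilon_2$- and $\rho$-dependent estimates through the stages of (\ref{a3}) using Lemmas \ref{lp} and \ref{lrho}, splitting into the overlapping cases $\|[\theta_1,\theta_2]^\top\|>1/\sqrt{3}$ and $\|[\theta_2,\theta_3]^\top\|>1/\sqrt{3}$, and reaches a contradiction for $\rho<1/(4(|b|+|c|))$ and $\epsilon_2$ small. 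You instead prove \emph{exact} rigidity (no unit $\theta$ admits $\theta^\top\dot{y}^*\equiv0$ on a window of length $3T$), with the case split generic versus $\theta=\pm e_1,\pm e_3$, and then obtain the uniform constant $\alpha_1$ from periodicity of $A$, compactness of $[0,T]\times S^2\times S^2$, and continuous dependence of solutions on initial data. The paper's route buys explicit constants ($\alpha_1$, $T_1$, and an explicit threshold on $\rho$) with no compactness appeal; your route buys far less bookkeeping, an explicit identification of the $g$-independent ramp over-rotation $b\rho$, $c\rho$ as the mechanism that injects energy into the degenerate axes (only implicit in the paper), and a robustness worth emphasizing: you use Lemma \ref{lp} purely in its qualitative $\epsilon=0$ form, so you are insensitive to the quantitative claims (\ref{eg}) and (\ref{yabs}) --- the latter of which, as stated, holds only for special initial phases $\beta(t_0)$, and is applied in (\ref{inter}) across the half-turn $[\bar{T}_1,\bar{T}_2]$ where the endpoint difference is $y^*_2(\bar{T}_2)-y^*_2(\bar{T}_1)=-2y^*_2(\bar{T}_1)$ and degenerates exactly when $y^*_2(\bar{T}_1)$ is small; your argument is immune to this delicacy. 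Two small points to tighten in your write-up: state explicitly that the scalar bound $J\geq\alpha_1$ for every unit $\theta$ yields the matrix inequality in (\ref{ps}), and note that your $\alpha_1$ and $T_1=3T$ depend on $\rho$ and on the chosen $g$, which the quantifier structure of (\ref{ps}) permits.
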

\begin{proof}
See Appendix \ref{app:proof3}.
\end{proof}
\setcounter{equation}{0}
\section{Simulations}\label{sec:sim}
In this case we study the behaviour of the system in four different scenarios in 2-dimensional space.

In the first simulation we study the case where $x=[0.5,\; 3]^\top$, $d=2$, and $y(0)=[8,\;5]^\top$. The corresponding result is depicted in Fig. \ref{fig:withoutnorm}. A closer look at the agent trajectory reveals a very small radius turn near the point $[2,\;1]^\top$. The reason for this behaviour is the following. The term $(\hat{D}^2(t)-d^2)(y(t)-\hat{x}(t)$ in (\ref{claw}) is designed to force $y(t)$ to move on a straight line trajectory  in a manner that drives $\hat{D}$ to $d$. The second term $A(t)(y(t)-\hat{x}(t))$ forces $y(t)$ to rotate around $\hat{x}(t)$. Initially the first term is dominant, and the agent quickly travels a long distance on  an almost straight line. By the time the agent reaches $[2,1]^T$, the rotational motion component becomes comparable to the straight line motion component; hence the effect of this change shows itself as a sharp turn. To make the trajectory smoother, in the second scenario we consider the same setting with the difference that instead of using $\dot{y}(t)$, we use the normalized signal; in other words the agent is moving with constant speed. We use the normalized version of (\ref{claw}). Furhermore,
\begin{equation*}
\dot{\overline{y}}(t)= \dot{\hat{x}}(t) -\left [ (\hat{D}^2(t)-d^2) I- A(t)\right ](y(t)-\hat{x}(t)),
\end{equation*}
\begin{equation*}
\dot{y}(t)=\left \{ \begin{array}{ll}
\dot{\overline{y}}(t)/\|\dot{\overline{y}}(t)\|&	\|\dot{\overline{y}}(t)\|\neq 0\\
0	&	\|\dot{\overline{y}}(t)\|=0
\end{array} \right.
\end{equation*}
 As can be observed, the small radius turn is replaced by one of larger radius. The result is presented in Fig. \ref{fig:withnorm}. In the first two cases the  desired  orbit at the prescribed distance is achieved. In the third simulation we studied the behaviour of the system when the source slowly drifts on a circle. on a circle with angular velocity equal to $0.005$. See Fig. \ref{fig:withdrift}. The agent maintain its distance from the source in a neighbourhood of the desired distance. Notice that the speed of the source is always much less than the speed of the agent. In the last simulation we consider the case where the distance measurement is noisy, and it is assumed that $\mathop{ln}\bar{D}=\mathop{ln}D+\mu(t)$, where $\bar{D}$ is measurement and $\mu$ is a strict-sense stationary random process with $\mu(t)\sim N(0,\sigma^2)$, $\forall t$. The simulation result associated with this scenario is depicted in Fig. \ref{fig:withnoise}. As it can be observed the control law is still successful in moving the agent to an orbit with distance to the source kept close to its desired value. 
\begin{figure}
\begin{center}
\includegraphics[width=8 cm]{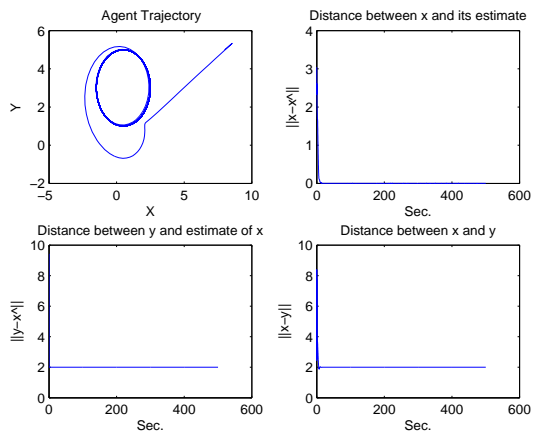}
\end{center}
\caption{Agent trajectory, agent distance from the estimate, agent distance from the real value, and distance between the estimate and true position of source.} \label{fig:withoutnorm}
\end{figure}

\begin{figure}
\begin{center}
\includegraphics[width=8 cm]{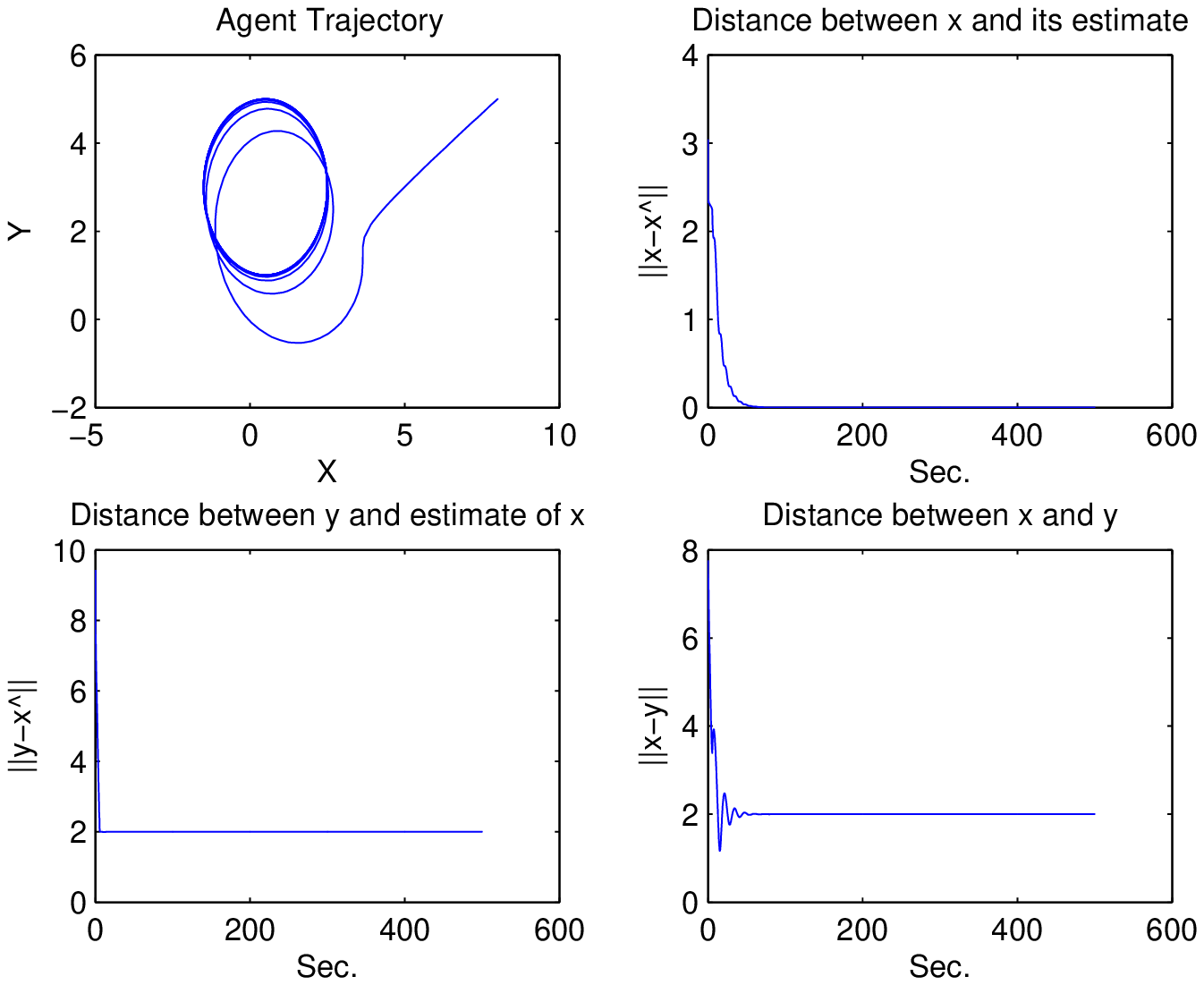}
\end{center}
\caption{Agent trajectory, agent distance from the estimate, agent distance from the real value, and distance between the estimate and true position of source, where agent is moving with constant speed.}\label{fig:withnorm}
\end{figure}

\begin{figure}
\begin{center}
\includegraphics[width=8 cm]{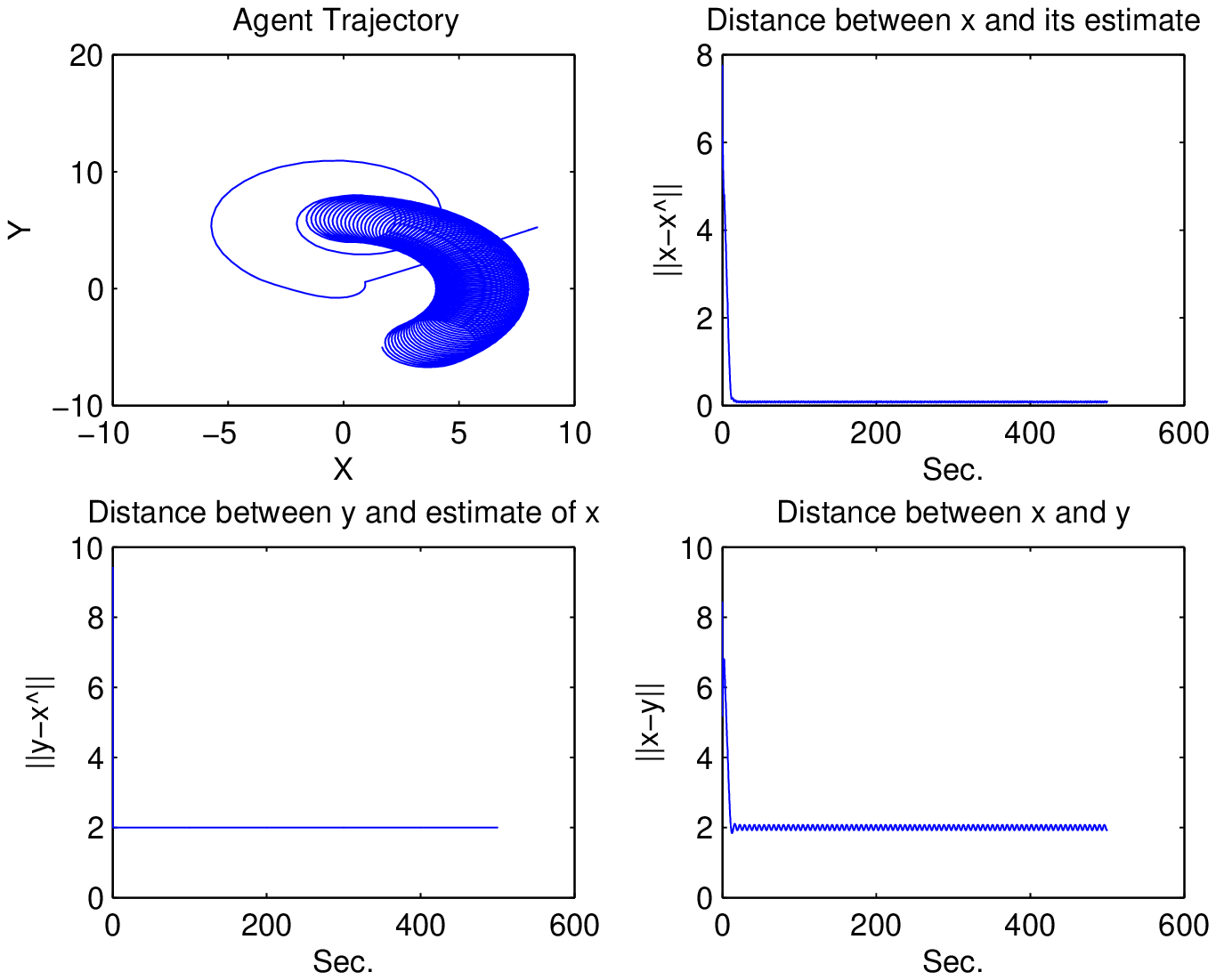}
\end{center}
\caption{Agent trajectory, agent distance from the estimate, agent distance from the real value, and distance between the estimate and true position of source, where the source is undergoing a drift.}\label{fig:withdrift}
\end{figure}

\begin{figure}
\begin{center}
\includegraphics[width=8 cm]{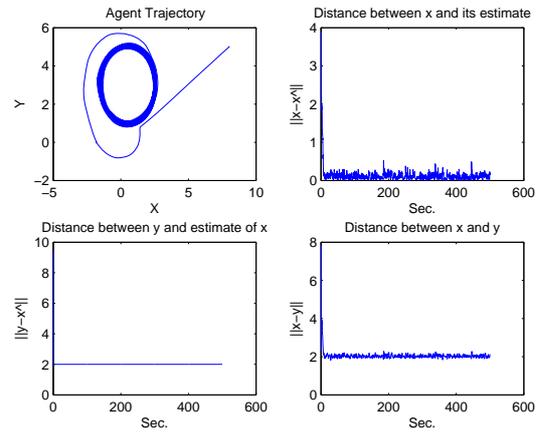}
\end{center}
\caption{Agent trajectory, agent distance from the estimate, agent distance from the real value, and distance between the estimate and true position of source, in the presence of noisy measurement.}\label{fig:withnoise}
\end{figure}
\section{Future Work and Concluding Remarks}
In this paper we proposed an algorithm to solve the problem of monitoring a source at an unknown position by a single agent while the only information available to the agent is its distance to the target. Stability of the system has been established. Furthermore, in simulations the performance of the method in the presence of noise and in the situations where the source is undergoing a drifting motion is presented. An important future work is to establish stability of the system when the source is undergoing a drifting motion. Another possible extension of the current scheme is to consider the cases where more than one agent is present.
\bibliographystyle{IEEEtran}
\bibliography{Adaptive}

\appendix
\setcounter{equation}{0}
\renewcommand{\theequation}{A.\arabic{equation}}
\subsection{Proof of Theorem \ref{lass}} \label{app:proof1}
First observe that the system (\ref{D}) -  (\ref{claw}) is in fact {\it periodic}. Thus, convergence if it holds, will be uniform in
the initial time.

Consider $L(t)$ and $L_i(t)$ defined in Lemma \ref{lbd}.
Now for every finite initial conditions there is a $\Delta$ such that 
\[
[\xh^\top(0) , y^\top(0), z_1(0), z_2(0), z_3^\top(0)]^\top \in S(\Delta) .
\]
As shown above this set is compact.

Note that for all $i\in\{1,2,3\}$, $\dot{L}_i(t)=-2\alpha L_i(t)$.

Then because of Lemma \ref{L1}, (\ref{aloc}), (\ref{claw}) and (\ref{vr}), there holds
\begin{equation}\label{ldot}
\begin{split}
\dot{L}(t)&= -\frac{1}{2} \left ( \eta(t) - m(t) +V^\top(t) x \right )^2 \\
&-\xt^\top(t)V(t) \left ( \eta(t) - m(t) +V^\top(t) x +V^\top(t) \xt (t)\right )\\
&+    \left ( \hat{D}^2(t) -d^2 \right ) (y^\top(t)-\xh^\top (t))\left (\dot{y}(t) - \dot{\xh}(t) \right  )\\
& -\aa \sum_{i=1}^3L_i(t) \\
&= -\frac{1}{2} \left ( \eta(t) - m(t) +V^\top(t) \xh(t) \right )^2-\frac{1}{2}(\xt^\top(t)V(t))^2\\
&-    \left ( \hat{D}^2(t) -d^2 \right ) ^2\hat{D}^2(t) -\aa \sum_{i=1}^3L_i(t) \leq 0 
\end{split}
\end{equation}
Thus all trajectories commencing in $S(\Delta)$ lie in $S(\Delta)$, and hence $\|\hat{x}^\top(t),\;y^\top(t),\;z_1(t),\;z_2(t),\;z_3^\top(t)\|$ is bounded $\forall t\geq 0$.

To prove the theorem we simply need to show that convergence occurs to $S(\Delta)\bigcap {\mathcal S_I}$. By Lasalle's theorem this will hold if
$S(\Delta)\bigcap {\mathcal S_I}$ is the largest invariant set in $S(\Delta)$. Lemma \ref{linv}
and (\ref{ldot})  already shows that 
$S(\Delta)\bigcap {\mathcal S_I}$ is an invariant set. To show that it is the largest invariant set 
we must show that
\be \la{ident}
\dot{L} \equiv 0,
\ee
implies that there hold for all $t\geq 0$
\be \la{xc}
\xh(t)=x,
\ee
\be \la{dc}
\|y(t)-x\|=d,
\ee
\be \la{zc}
z_1(t)-z_2(t)+z_3^\top(t)x = \frac{x^\top x}{2\aa}.
\ee
Now (\ref{ident}) necessitates the pair of identities:
\be \la{xhc}
\dot{\xh} \equiv 0,
\ee
\be \la{dhc}
\hat{D} \equiv d.
\ee
From (\ref{xhc}) one obtains that for some constant  $x^*$
\be \la{xstead}
\xh\equiv x^*.
\ee
Further under (\ref{dhc}), (\ref{xstead}) and (\ref{xhc}), (\ref{claw}) reduces to
\be \la{ystead}
\dot{y}(t)= A(t)(y(t)-x^*).
\ee
Thus, along trajectories corresponding to (\ref{ident}), there holds:
\be \la{sinus}
y(t)=x^*+ y^*(t),
\ee
where $y^*(t)$ is a solution of (\ref{y*}).
Denoting $a=\eta (t_0)- m(t_0) +V^\top(t_0)x$, because of (\ref{aloc}), (\ref{xhc}), (\ref{xstead}),  and Lemma \ref{lem:Lemma1} for all $t\geq t_0$, one has that:
\begin{eqnarray*}
0&=& \eta(t) - m(t) +V^\top(t) x^*\\
&=&\eta(t) - m(t) +V^\top(t) x +V^\top(t)(x^*-x)\\
&=& ae^{-\aa (t-t_0)} + V^\top(t)(x^*-x).
\end{eqnarray*}
Thus one has
\begin{equation}\label{eq:V1}
V^\top(t)(x^*-x)= -ae^{-\aa (t-t_0)} 
\end{equation}
\begin{equation}\label{eq:dotV1}
\frac{d}{dt} \left \{   V^\top(t) (x^*-x) \right \}=\aa a e^{-\aa(t-t_0)}. 
\end{equation}
As $(x^*-x)$ is a constant, combining (\ref{eq:V1}), (\ref{eq:dotV1}) with (\ref{n3}), we obtain that along trajectories corresponding to (\ref{ident}), for all $t\geq t_0$, 
\begin{eqnarray*}
\aa a e^{-\aa(t-t_0)} = \aa a e^{-\aa(t-t_0)} + \dot{y}^\top(t)(x^*-x) ,
\end{eqnarray*}
i.e. for all $t\geq 0$, $ \dot{y}^\top(t)(x^*-x) =0$. Given that $(x^*-x) $ is a constant and because of (\ref{ps}) and (\ref{sinus}) this can only hold if $x^*=x$.
Further in view of (\ref{dhc}) along (\ref{ident}) $D\equiv d$.

Thus (\ref{xc}) and (\ref{dc}) are necessary for (\ref{ident}) to hold.
Finally we observe from (\ref{ident}), (\ref{xc}), and the first term in (\ref{ldot}) that:
\begin{eqnarray*}
\begin{split}
&  \eta -m +V^\top\xh \equiv \eta -m +V^\top x \equiv 0, \\
\Rightarrow& \dot{\eta} -\dot{m} +\dot{V}^\top x \equiv 0 \\
\Leftrightarrow& -\aa(z_1-z_2+z_3^\top x) + \frac{ D^2 - \|y\|^2 +2y^\top x}{2}\equiv 0, \\
\Leftrightarrow& -\aa \left ( z_1-z_2+z_3^\top x - \frac{ x^\top x}{2\aa} \right )\equiv 0, \mbox{ as $D\equiv d$}, 
\end{split}\end{eqnarray*}
establishing (\ref{zc}).

\setcounter{equation}{0}
\renewcommand{\theequation}{B.\arabic{equation}}
\subsection{Proof of Lemma \ref{lydpe}}\label{app:proof2}
A consequence of assumption \ref{aa} is that for all unit $\theta\in \mR^n$,  $t\geq 0$ and $z\in \mR^n$, there holds:
\be \la{psm}
\aa_1  \| z\|^2 \leq \int_{t}^{t+T_1} | \theta^\top A(\tau)\Phi(\tau,t) z |^2 d \tau .
\ee
Further because of (\ref{claw}) for all $t_1\geq 0$ and $t\geq t_1$ there holds:
\be \la{clawc}
\begin{split}
\dot{y}(t)-\dot{\xh}(t)&= A(t)\Phi(t,t_1) (y(t_1)-\xh(t_1) )\\
& -A(t) \int_{t_1}^t \Phi(t,\tau) ( \hat{D}^2(\tau)-d^2 )
  (y(\tau)-\xh(\tau) ) d\tau .
\end{split}
\ee
Assumption \ref{aa} ensures that $A(t)$ is bounded. Thus there exists $M_2$ 
such that
\be \la{abd}
\|A(t)\| \leq M_2\;\;\;\;\forall t.
\ee
 Further because of Lemma \ref{L1} and Theorem \ref{lass}, there is a 
$\gamma>0$, such that for every $\epsilon>0$, there is a $t_2$ such that for all $t\geq t_2$,
\be \la{deps}
|d-\|y(t)-\xh(t)\|| \leq \epsilon,
\ee
\be \la{xdps}
\|\dot{\xh}(t)\| \leq \epsilon,
\ee
\be \la{ddeps}
|\hat{D}^2(t)-d^2 |\leq \epsilon e^{-\gamma (t-t_2)}.
\ee
Thus because of Lemma \ref{lorth}, (\ref{clawc})- (\ref{ddeps}) 
 for every  unit $\theta\in \mR^n$ and  $t\geq t_2$ 
\be \la{claw2}
|\theta^\top\dot{y}(t)|\geq |\theta^\top A(t)\Phi(t,t_2) (y(t_2)-\xh(t_2) ) | -\epsilon-
 \frac{\epsilon (d+\epsilon)M_2}{\gamma}.  \nonumber\\
\ee
Thus there exist $K_i$ all positive such that 
for all $t\geq t_2$ there holds
\be \la{claw3}
|\theta^\top\dot{y}(t)|^2\geq |\theta^\top A(t)\Phi(t,t_2) (y(t_2)-\xh(t_2) ) |^2 - \sum_{i=1}^4K_i\epsilon^i .
\ee
Choose $T_2=T_1+t_2$. Then because of (\ref{psm}) and (\ref{claw3}) for all $t>0$,
there holds:
\begin{eqnarray*} 
\begin{array}{l}
\int_{t}^{t+T_2} |\theta^\top\dot{y}(\tau)|^2d\tau\\
\geq \int_{t+t_2}^{t+T_2} |\theta^\top\dot{y}(\tau)|^2d\tau \\
\geq \int_{t+t_2}^{t+T_2} |\theta^\top A(\tau)\Phi(\tau,t_2) (y(t_2)-\xh(t_2) )  |^2 d\tau  \\- T_1 \sum_{i=1}^4K_i\epsilon^i\\
\geq \aa_1 (d- \epsilon )^2- T_1 \sum_{i=1}^4K_i\epsilon^i\\
\geq \aa_1 d^2 - 2\aa_1 d\epsilon - T_1 \sum_{i=1}^4K_i\epsilon^i
\end{array}
\end{eqnarray*}
Then the left inequality in (\ref{ydpe}) follows by choosing $\epsilon$ so that
\be \la{epsbd}
T_1 \sum_{i=1}^4K_i\epsilon^i+2\aa_1 d\epsilon \leq \aa_1 d^2/2.
\ee
The  right inequality in (\ref{ydpe}) follows from the boundedness of $\hat{x}$, (\ref{claw}), (\ref{y*}), (\ref{ps}), and Lemma  \ref{lem:Lemma1}.

\setcounter{equation}{0}
\renewcommand{\theequation}{C.\arabic{equation}}
\subsection{Proof of Theorem \ref{t3pers}}\label{app:proof3}
We will prove the result by contradiction. First observe that as $A(t)$ is differentiable and $\ddot{y}^*$ is bounded.
Also observe that if (\ref{ps}) holds for $\|y^*(0)\|=1$, then it holds for
arbitrary $\|y^*(0)\|$.
Thus assume that  $\|y^*(0)\|=1$. Consequently for all $t\geq 0$
\be \la{un}
\|y^*(t)\|=1. 
\ee
Suppose  (\ref{ps}) is violated. Then for all $\eps _2  >0$ and $T_3>0$, there exists a $t_0$
and a unit norm  $\theta = [\theta_1, \theta_2 , \theta_3]^\top \in \mR^3$, such that 
\[
\int_{t_0}^{t_0+T_3} (\theta^\top \dot{y}^*(\tau) )^2d\tau \leq \eps _2 ^2.
\]
Thus from Lemma \ref{Lmit} for some $M_3 $, all $\eps _2  >0$, some $T_4(\eps _2 )$, dependent only on the bound on 
$\ddot{y}^*(\cdot)$ and $\eps _2 $, and all $T_3> T_4(\eps _2 ) $, 
there exists a $t_0$  and unit norm  $\theta \in \mR^3$, for which 
\be \la{ydsm}
\left | \theta^\top \dot{y}^*(t)\right | \leq M_3 \eps _2 ^{1/2} \;\;\; \forall t\in [t_0,t_0+T_3] .
\ee
Choose 
\be \la{t1}
t_1 =\min\{kT \geq t_0+ T_4(\eps _2 ) |  k\in {\mathbb{Z}}_+\}.
\ee
Denote $ y^* =[y^*_1, y^* _2,y^* _3]^\top$. Observe at least one
of $\|[\theta_1, \theta_2]^\top\| $ or $\|[\theta_2, \theta_3]^\top\| $ must exceed $ 1/\sqrt{3} $, since $\theta$ has unit norm.
We consider two cases.

\noindent
{\bf Case I: $\| [\theta_1, \theta_2]^\top \| >1/\sqrt{3} $ .} 

Since the inequality in (\ref{ydsm}) holds on the indicated interval, it must hold for all $t\in[t_1+kT+\bar{T}_4, t_1+kT+\bar{T}_5]$,
$k\in {\mathbb{Z}}$. 

Thus for all $t\in[t_1+kT+\bar{T}_4, t_1+kT+\bar{T}_5]$ and 
$k\in {\mathbb{Z}}$, there holds:
\be \la{y12b1}
|[\theta_1, \theta_2]^\top [ \dot{y}^*_1(t) , \dot{y}^*_2(t)]|\leq  M_3 \eps _2 ^{1/2} .
\ee

Now for all $t\in[t_1+kT+\bar{T}_4, t_1+kT+\bar{T}_5]$,
there also holds:
\[
\left [ \begin{array}{c} \dot{y}^*_1(t) \\ \dot{y}^*_2(t) \end{array}\right ]
=cE\left [ \begin{array}{c} y^*_1(t) \\ y^*_2(t) \end{array}\right ].
\]
Thus, from (\ref{eg}) of Lemma \ref{lp} and the hypothesis of the case, we obtain
that for all $k\in {\mathbb{Z}}$,
\be\la{12bd}
\|[y^*_1(t_1+kT+\bar{T}_4) , y^*_2(t_1+kT+\bar{T}_4)]^\top \| \leq \frac{ \sqrt{3} M_3}{|c|}  \eps _2 ^{1/2} .
\ee
Further with some $h_1:\mR\rightarrow \mR$, in the interval $[kT+\bar{T}_4, (k+1)T]$,$ \left [ \begin{array}{c} \dot{y}^*_1(t) \\ \dot{y}^*_2(t) \end{array}\right ]
=h_1(t)E\left [ \begin{array}{c} y^*_1(t) \\ y^*_2(t) \end{array}\right ]$. Thus,
\be \la{y12b}
\begin{array}{l}
\|[y^*_1(t_1+(k+1)T) , y^*_2(t_1+(k+1)T) ]^\top\|=\\
\|[y^*_1(t_1+kT+\bar{T}_4) , y^*_2(t_1+kT+\bar{T}_4)]^\top \| \leq \frac{\sqrt{3} M_3}{|c|} 
 \eps _2 ^{1/2} .
\end{array}
\ee
Consequently because of (\ref{un}), there holds: 
\be \la{y23b}
\begin{array}{l}
\|[y^*_2(t_1+(k+1)T) , y^*_3(t_1+(k+1)T)]^\top \|\geq\\
| y^*_3(t_1+(k+1)T)|\geq 1- \frac{\sqrt{3}M_3}{|c|}  \eps _2 ^{1/2} 
\end{array}
\ee
Further throughout the interval $t\in [t_1+kT,t_1+kT+T_3]$ for some $h_2:\mR\rightarrow \mR$, $|h_2(t)|\leq 1$, 
\be \la{d23}
\left [ \begin{array}{c} \dot{y}^*_2(t) \\ \dot{y}^*_3(t) \end{array}\right ]
=h_2(t)E\left [ \begin{array}{c} y^*_2(t) \\ y^*_3(t) \end{array}\right ].
\ee
Thus from Lemma \ref{lrho} and (\ref{y12b})
\be \la{y2b}
 |y^*_2(t_1+kT+\bar{T}_1)|\leq \frac{\sqrt{3} M_3}{|c|}  \eps _2 ^{1/2} + \rho |b|.
\ee
Also from (\ref{d23}) and (\ref{y23b})
\[
\|[y^*_2(t) , y^*_3(t)]^\top \| \geq 1- \frac{\sqrt{3} M_3}{|c|}  \eps _2 ^{1/2} 
\]
holds for all $t\in [t_1+kT,t_1+kT+\bar{T}_3]$.
Notice in the interval $[[t_1+kT+\bar{T}_1,t_1+kT+\bar{T}_2]$, (\ref{d23}) holds with $h_2(t)=b$.
Thus from (\ref{yabs}) of Lemma \ref{lp},
\be \la{inter}
\begin{array}{l}
|  y^*_2(t_1+kT+\bar{T}_2)| +| y^*_2(t_1+kT+\bar{T}_1)|\\
\geq|  y^*_2(t_1+T+\bar{T}_2) -  y^*_2(t_1+T+\bar{T}_1)| \geq 1- \frac{\sqrt{3} M_3}{|c|}  \eps _2 ^{1/2} 
\end{array}
\ee
Consequently, from (\ref{y2b})
\[
 |y^*_2(t_1+kT+\bar{T}_2)|\geq 1- \frac{2\sqrt{3} M_3}{|c|}  \eps _2 ^{1/2}  -\rho |b| .
\]
Further, from Lemma \ref{lrho}
\begin{equation}\label{eq:star}
 |y^*_2(t_1+kT+\bar{T}_4)| \geq 1- \frac{2\sqrt{3} M_3}{|c|}  \eps _2 ^{1/2} -  \rho 
\left (2|b| +|c|\right ).
\end{equation}
Then for
\be \la{rhobd}
\rho < \frac{1}{4\left (|b| +|c|\right )}.
\ee
and sufficiently small $\eps _2 $, (\ref{eq:star}),  contradicts with (\ref{12bd}).

\noindent
{\bf Case II: $\|[\theta_2, \theta_3]^\top\| >1/\sqrt{3} $ .} Follows similarly with the same set of $\rho$ given in (\ref{rhobd}).
\end{document}